\documentclass[11pt]{article}

\usepackage{arxiv}

\usepackage[utf8]{inputenc} 
\usepackage[T1]{fontenc}    
\usepackage{hyperref}       
\usepackage{url}            
\usepackage{booktabs}       
\usepackage{amsfonts}       
\usepackage{nicefrac}       
\usepackage{microtype}      
\usepackage{amsthm}

\usepackage{epstopdf}
\usepackage{algorithmic}
\usepackage{amsmath, amssymb, color} 
\usepackage{cleveref}       
\usepackage{lipsum}         
\usepackage{graphicx}
\usepackage{doi}
\usepackage[shortlabels]{enumitem}
\usepackage{makecell}
\usepackage{multicol}
\usepackage{rotating}

\usepackage{array,amscd}
\usepackage{amsfonts}
\usepackage[T1]{fontenc} 
\usepackage{enumerate}
\usepackage[arrow, matrix, curve]{xy}
\usepackage[round,sort,authoryear]{natbib}
\bibliographystyle{plainnat}
\usepackage{siunitx}
\usepackage{multirow}
\usepackage[font=small,labelfont=bf]{caption}
\usepackage{blkarray}
\newcommand*{\email}[1]{\href{mailto:#1}{\nolinkurl{#1}} } 

\usepackage{tikz}
\usetikzlibrary{decorations}
\usetikzlibrary {shapes}
\usetikzlibrary {arrows}
\usetikzlibrary {positioning}
\usetikzlibrary {calc}

\ifpdf
  \DeclareGraphicsExtensions{.eps,.pdf,.png,.jpg}
\else
  \DeclareGraphicsExtensions{.eps}
\fi


\definecolor{darkgreen}{rgb}{0,0.4,0}
\definecolor{MyBlue}{rgb}{0,0.08,0.7} 
\definecolor{MyRed}{rgb}{0.85,0.08,0}

\makeatletter
\renewcommand*\env@matrix[1][\arraystretch]{%
  \edef\arraystretch{#1}%
  \hskip -\arraycolsep
  \let\@ifnextchar\new@ifnextchar
  \array{*\c@MaxMatrixCols c}}
\makeatother

\newcommand{\PD}{\mathrm{PD}}

\newtheorem{theorem}{Theorem}
\newtheorem{proposition}[theorem]{Proposition}
\newtheorem{lemma}[theorem]{Lemma}
\newtheorem{corollary}[theorem]{Corollary}
\newtheorem{definition}[theorem]{Definition}

\newtheorem{remark}[theorem]{Remark}
\newtheorem{example}[theorem]{Example}


\title{Identifiability in Continuous Lyapunov Models}


\author{{Philipp Dettling}\thanks{Technical University of Munich; TUM School of Computation, Information and Technology, Department of Mathematics and Munich Data Science Institute  (\email{philipp.dettling@tum.de}, \email{roser.homs@tum.de},  \email{mathias.drton@tum.de}).}
\And {Roser Homs}\footnotemark[1]
\And {Carlos Améndola}\thanks{Technical University of Berlin, Institute of Mathematics; Max Planck Institute for Mathematics in the Sciences, Leipzig, Germany (\email{amendola@math.tu-berlin.de})}
\And {Mathias Drton}\footnotemark[1]
\And
{Niels Richard Hansen}\thanks{University of Copenhagen, Denmark; Department of Mathematical Sciences
(\email{niels.r.hansen@math.ku.dk}).}}




\begin{document}

\maketitle

\begin{abstract}
    The recently introduced graphical continuous Lyapunov models provide a new approach to statistical modeling of correlated multivariate data.  The models view each observation as a one-time cross-sectional snapshot of a multivariate dynamic process in equilibrium.  The covariance matrix for the data is obtained by solving a continuous Lyapunov equation that is parametrized by the drift matrix of the dynamic process.  In this context, different statistical models postulate different sparsity patterns in the drift matrix, and it becomes a crucial problem to clarify whether a given sparsity assumption allows one to uniquely recover the drift matrix parameters from the covariance matrix of the data.  We study this identifiability problem by representing sparsity patterns by directed graphs.  Our main result  proves that the drift matrix is globally identifiable if and only if the graph for the sparsity pattern is simple (i.e., does not contain directed two-cycles).  Moreover, we present a necessary condition for generic identifiability and provide a computational classification of small graphs with up to 5 nodes.
\end{abstract}

\section{Introduction}

Recent work of \cite{katie2019} and \cite{hansen2020} introduces Lyapunov models as a new paradigm of probabilistic graphical modeling \citep{handbook:graphical:models}.
When capturing cause-effect relations among observations, standard graphical models directly postulate noisy functional relations among the considered random variables \citep{pearl:2009,spirtes:2000,peters2017}.  In contrast, the new Lyapunov models introduce a temporal perspective that simplifies, in particular, modeling of feedback loops.  Suppose the data at hand are collected by observing a $p$-dimensional random vector.  
Lyapunov models assume the random vector to arise as a one-time cross-sectional observation of a $p$-dimensional dynamic process in equilibrium.    
When working in continuous time, the natural model for the process is an Ornstein-Uhlenbeck process $X(t)$ that is given by the stochastic differential equation
\begin{equation}
\label{eq:pdimOU}
    \mathrm{d}X(t)=M(X(t)-a)\,\mathrm{d}t+ D\, \mathrm{d}W(t),
\end{equation}
where $W(t)$ is a Wiener process, and $a \in \mathbb{R}^{p}$ and $M,D \in \mathbb{R}^{p \times p}$ are non-singular parameter matrices. In this context, the matrix $M$ is a drift matrix that quantifies temporal cause-effect relations among the variables, and $C=DD^\top$ is a positive definite volatility matrix.  

If $M$ is a stable matrix, i.e., all its eigenvalues have negative real part, then $X(t)$ has a stationary Gaussian distribution $N(a,\Sigma)$, where the covariance matrix $\Sigma$ is the unique solution \citep{risken1996} to the continuous Lyapunov equation
\begin{equation}
\label{eq:lyapunoveq}
    M\Sigma+\Sigma M^{\top}+C=0.
\end{equation}

A graphical continuous Lyapunov model as defined by \cite{katie2019} and \cite{hansen2020} refines this setup by assuming that the drift matrix $M=(m_{ij})$ exhibits a specific zero pattern that is given by a directed graph.  
A similar perspective was presented by \cite{Young:2019} for discrete time 
autoregressive
models, which leads to an equilibrium
covariance matrix solving the \emph{discrete} Lyapunov equation.
Based on an estimated covariance matrix $\hat{\Sigma}$, both \cite{katie2019} and \cite{hansen2020} develop estimation techniques for $M$ using \eqref{eq:pdimOU}. They consider a setting where the data are comprised of a sample of independent and identically distributed random observation vectors that are obtained from several independent copies of the multivariate Ornstein-Uhlenbeck process in equilibrium. An application is shown in both works using the data set of \cite{Sachs2005}. These data contains measurements of expression of different proteins in human immune system cells that are harvested and subjected to flow cytometry (and thus `destroyed'). The estimation methods of  \cite{katie2019} and \cite{hansen2020} are applied to obtain estimates of the protein signaling network, recovering substantial parts of the version accepted among biologists. Naturally, statisticians then aim to provide theoretical guarantees for the methods applied.  Under the assumption of parameter identifiability, a consistency result for the estimation method of \cite{katie2019} is derived in \cite{Dettling2022}. Parameter identifiability means that given a fixed support of the matrix $M$ and positive definite matrices $C,\Sigma$, we are able to uniquely recover the entries in $M$. This central question for statistical theory for Lyapunov models is the object of study in this work.

\subsection*{Organization and results of the paper}

In \Cref{sec:preliminaries} we introduce graphical continuous Lyapunov models and motivate the question of identifiability with the help of the directed 3-cycle as a running example.
In \Cref{sec:identifiabilityASigma} we formally introduce the notions of generic and global identifiability and make some preliminary observations. In \Cref{sec:asigmastru}, we explain the structure of the matrix $A(\Sigma)$ that arises from (half-)vectorization of the Lyapunov equation.  We also highlight how the rank of a submatrix of $A(\Sigma)$ determines generic and global identifiability of a model.  Exploiting block structure in the relevant submatrix of $A(\Sigma)$, we prove global identifiability for all directed acyclic graphs (DAGs) in \Cref{sec:DAGs}. Our proof also yields that the models given by DAGs are closed algebraic subsets of $\PD_p$, and that the models associated to complete DAGs are equal to $\PD_p$
 (\Cref{cor:completeModel}).  In \Cref{sec:cyclic}, we turn to cyclic graphs for which the relevant matrices no longer exhibit block structure.  We demonstrate that for small graphs the approach studying factorizations of determinants can still be implemented using sum of squares methods to certify that the relevant polynomials are positive on $\PD_p$.  
 In \Cref{sec:stable} we present our main result (\Cref{th:simple}), which proves that global model identifiability holds if the underlying graph is simple (i.e., does not contain any 2-cycle).  If $C$ is diagonal---the case of primary practical interest---then the requirement that the graph be simple is also necessary for global identifiability.  Moreover, we are able to show that for all $C\in\PD_p$, all simple graphs yield models $\mathcal{M}_{G,C}$ that are closed algebraic subsets  of $\PD_p$. We discuss further the diagonal hypothesis on $C$ in \Cref{sec:C}. 
In \Cref{sec:nonsimple}, we turn to the weaker notion of generic identifiability, for which we develop a necessary criterion and computationally classify all non-simple graphs with up to 5 nodes.
The paper concludes in \Cref{sec:conclusion}. Some details on the structure of the matrix $A(\Sigma)$ and the factorization of its minors are deferred to \Cref{sec:app}.

The code we used for our computations is available at the repository website \url{https://mathrepo.mis.mpg.de/LyapunovIdentifiability}.

\section{Preliminaries}
\label{sec:preliminaries}

A graphical continuous Lyapunov model as defined in \cite{Dettling2022} considers the setup that the drift matrix $M=(m_{ij})$ exhibits a specific zero pattern that is given by a directed graph $G$ on the set of nodes $[p]=\{1,\dots,p\}$, with $m_{ji}=0$ whenever $i\to j$ is not an edge in $G$. In this setting our graphs will always include self-loops $i\to i$. 

\begin{example}
\label{exa:graphM}
The directed 3-cycle $G$ with vertex set $V=\lbrace 1,2,3 \rbrace$ and edge set $E= \lbrace 1\to 1, 2 \to 2, 3 \to 3, 1\to 2, 2\to 3, 3 \to 1 \rbrace$, which is displayed in \Cref{fig:cycle}, encodes drift matrices of the form
$$M = \begin{pmatrix}
m_{11} & 0 & m_{13} \\
m_{21} & m_{22} & 0 \\
0 & m_{32} & m_{33} 
\end{pmatrix}.$$
\end{example}

\begin{figure}[t]
\begin{center}
\begin{tikzpicture}[->,every node/.style={circle,draw},line width=1pt]
  \node (1) at (1.5,1.0) {1};
  \node (2) at (0,0) {$2$};
  \node (3) at (3,0) {$3$};
\foreach \from/\to in {1/2,2/3,3/1}
\draw (\from) -- (\to);   
\path (1) edge [loop above] (1);
\path (2) edge [loop left] (2);
\path (3) edge [loop right] (3);
 \end{tikzpicture}  
\caption{The directed 3-cycle.}
\label{fig:cycle}
\end{center}
\end{figure}
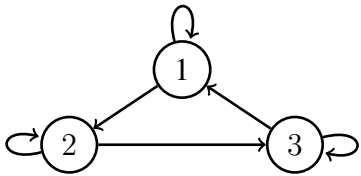

The Lyapunov equation from \eqref{eq:lyapunoveq} is a symmetric matrix equation providing $p(p+1)/2$ constraints.  In contrast, the drift matrix $M$ is a $p\times p$ matrix that need not be symmetric.  Hence, without any assumptions on its structure, $M$ is never uniquely determined by the covariance matrix $\Sigma$ of the observations.
For graphical Lyapunov models, this leads to a key identifiability question:  For which sparsity patterns 
can the drift matrix $M$ be recovered from the positive definite covariance matrix $\Sigma$?   Our treatment of this question will assume that the volatility matrix $C$ is a known positive definite matrix. 
While some of our results hold for all positive definite $C$, others require the assumption that $C$ is diagonal. This is a sensible assumption as it corresponds to the setting of uncorrelated noise. A special case is the assumption $C=2I_p$ that covers the natural setting of homoscedastic noise.

\begin{remark}
Evidently, if a matrix $\Sigma$ solves the Lyapunov equation \eqref{eq:lyapunoveq} for a pair $(M,C)$ then $\Sigma$ also solves the equation given by $(\gamma M,\gamma C)$ for any $\gamma\in\mathbb{R}$. An implication of this fact is that our results on recovery of $M$ for fixed $C$ also address the setting of models in which $C=\gamma C'$, with $C'$ known and positive definite but $\gamma>0$ an unknown parameter.  In this latter setting, one can only hope to recover $M$ up to a scalar multiple and this is possible if and only if $M$ can be recovered uniquely in the setting where we fix $C=C'$.
\end{remark}

Before proceeding to illustrate the identifiability problem for \Cref{exa:graphM}, we give a formal definition of graphical continuous Lyapunov models as sets of covariance matrices.  We 
write $\PD_p$ for the cone of $p \times p$ positive definite matrices.

 \begin{definition}\label{def:model}
Let $G=(V,E)$ be a directed graph with vertex set $V=[p]$ and an edge set $E$ that includes all self-loops $i\to i$, $i\in[p]$. We write
$\mathbb{R}^E$ for the space of matrices $M=(m_{ij})\in\mathbb{R}^{p \times p}$ with $m_{ji}=0$ whenever $i\to j \notin E$.    Given a choice of $C\in\PD_p$, the \emph{graphical continuous Lyapunov model} of $G$ is the set of covariance matrices
\begin{equation*}\label{eqn:modelG}
    \mathcal{M}_{G,C} = \big\lbrace \Sigma \in \PD_p \,  : \, M\Sigma +\Sigma M^{\top}= -C\, \text{ for some }  M \in \mathbb{R}^E  \big\rbrace ,  
\end{equation*} 
\end{definition}

\begin{remark}\label{rem:dropStability}
Let $\mathrm{Stab}(E)\subseteq\mathbb{R}^E$ be the subset of stable matrices, which is always non-empty and open, since $E$ is assumed to always include all self-loops, see Definition \ref{def:model}. In particular, this implies that $\dim (\mathrm{Stab}(E))=|E|$.
When $C$ is positive definite, the Lyapunov equation from \eqref{eq:lyapunoveq} has a positive definite solution $\Sigma$ if and only if $M$ is stable \citep[Theorem 1.1]{bhaya2003}.  Hence, the definition of the model $\mathcal{M}_{G,C}$ remains unchanged if we replace the requirement $M\in\mathbb{R}^E$ by $M\in\mathrm{Stab}(E)$.
\end{remark}

The identifiability question we pose asks if a covariance matrix $\Sigma$ in the model $\mathcal{M}_{G,C}$ may simultaneously solve the Lyapunov equation for more than one choice of a matrix $M\in\mathbb{R}^E$.  In other words, we study the injectivity of the (rational) parametrization map
\begin{equation}\label{eq:map}
\begin{split}
    \phi_{G,C}:\, \mathrm{Stab}(E) &\rightarrow \PD_p\\
        M&\mapsto \Sigma(M,C),
\end{split}        
\end{equation}
where $\Sigma(M,C)$ is the unique matrix $\Sigma$ that solves the Lyapunov equation given by the stable matrix $M$ and positive definite $C$. See \eqref{eq:lyapunoveq usual vec} for details on this uniqueness. 

By vectorization, the Lyapunov equation~\eqref{eq:lyapunoveq} is transformed into the linear equation system  
\begin{equation}
\label{eq:ASigma:p3}
A(\Sigma) \mathrm{vec}(M) = -\mathrm{vech}(C),
\end{equation}
where $\mathrm{vech}(C)$ is the half-vectorization of a fixed symmetric matrix $C\in\PD_p$, 
and $A(\Sigma)$ is a $p(p+1)/2 \times p^2$ matrix depending on $\Sigma$ whose form will be discussed in \Cref{sec:asigmastru}.

\begin{example}
\label{ex:ASigma3}
In the case of $p=3$ variables the matrix $A(\Sigma)$ equals
\begin{align*}
\begin{blockarray}{@{\hspace{10pt}}c@{\hspace{10pt}}c@{\hspace{10pt}}c@{\hspace{10pt}}c@{\hspace{10pt}}c@{\hspace{10pt}}c@{\hspace{10pt}}c@{\hspace{10pt}}c@{\hspace{10pt}}c@{\hspace{10pt}}c}
& 1\to 1 & 1\to 2 & 1\to 3 & 2 \to 1 & 2 \to 2 & 2 \to 3 & 3 \to 1 & 3 \to 2 & 3 \to 3 \\
\begin{block}{c(ccccccccc)}
            (1,1)&2 \Sigma_{11} &0&0&2 \Sigma_{12} & 0 &0 & 2\Sigma_{13} & 0 & 0 \\
            (1,2)&\Sigma_{12} & \Sigma_{11} &0& \Sigma_{22}  & \Sigma_{12} &0& \Sigma_{23} & \Sigma_{13} &0\\
            (1,3)&\Sigma_{13} &0&\Sigma_{11}& \Sigma_{23} & 0 & \Sigma_{12}& \Sigma_{33} & 0 & \Sigma_{13}\\
            (2,2)&0 &2\Sigma_{12}& 0& 0 & 2 \Sigma_{22} &0& 0 & 2\Sigma_{23} & 0 \\
            (2,3)&0 & \Sigma_{13}&\Sigma_{12}& 0 & \Sigma_{23}& \Sigma_{22}   & 0 & \Sigma_{33} & \Sigma_{23}\\
            (3,3)&0 & 0&2\Sigma_{13}& 0 & 0&2\Sigma_{23} & 0 & 0 & 2 \Sigma_{33}\\
\end{block}
\end{blockarray},
\end{align*}
where the column index $i\to j$ corresponds to entry $m_{ji}$ of the drift matrix $M=(m_{ij})$.
\end{example}

Given a graph $G$ with $p(p+1)/2$ edges, unique solvability of~\eqref{eq:ASigma:p3} for $M\in\mathbb{R}^E$ is equivalent to a certain maximal square submatrix of $A(\Sigma)$ being invertible. This submatrix is formed by all columns of $A(\Sigma)$ corresponding to edges of the graph. Observe that two columns indexed by $i\to j$ and $k\to l$ have the same zero pattern whenever $j=l$. This motivates ordering the columns of $A(\Sigma)_{\cdot,E}$ increasingly with
\begin{align}\label{eq:col_order}
    i \to j < k \to l  \quad \text{ if } j<l \text{ or } j=l,i<k.  
\end{align}
\noindent 
Moreover, note that for simple graphs there is a natural pairing between pairs $(i,j)$ with $i\leq j$ and edges between $i$ and $j$. In this case, we will order rows accordingly with their corresponding pair $(i,j)$.

\begin{example}\label{ex:Identifiability3cycle}
Consider the 3-cycle $G$ from \Cref{exa:graphM}. The submatrix of $A(\Sigma)$ in \Cref{ex:ASigma3} associated to $G$ is
\begin{equation*}
    A(\Sigma)_{\cdot,E}=
\begin{blockarray}{ccccccc}
&1 \to 1 & 3 \to 1 & 1 \to 2 & 2 \to 2 & 2 \to 3   & 3 \to 3 \\
\begin{block}{c(cccccc)}
   (1,1) & 2\Sigma_{11}&2\Sigma_{13}&0&0&0&0\\
   (1,3) & \Sigma_{13}&\Sigma_{33}&0&0&\Sigma_{12}&\Sigma_{13}\\
   (1,2) &  \Sigma_{12}&\Sigma_{23}&\Sigma_{11}&\Sigma_{12}&0&0\\
   (2,2) & 0&0&2\Sigma_{12}&2\Sigma_{22}&0&0\\
   (2,3) & 0&0&\Sigma_{13}&\Sigma_{23}&\Sigma_{22}&\Sigma_{23}\\
   (3,3) & 0&0&0&0&2\Sigma_{23}&2\Sigma_{33}\\
    \end{block}
\end{blockarray}.
\end{equation*}
To show invertibility of $A(\Sigma)_{\cdot,E}$, we may inspect its determinant, which factorizes as 
\begin{equation}
\label{eq:detASigma3cycle}
 |\det(A(\Sigma)_{\cdot,E})|= 2^3 \cdot \det(\Sigma) \cdot (\Sigma_{11}\Sigma_{22}\Sigma_{33}-\Sigma_{12}\Sigma_{13}\Sigma_{23}).
\end{equation}
All displayed factors are positive when $\Sigma$ is positive definite. Indeed,  $\det(\Sigma)>0$ and the fact that $\det(\Sigma_{ij,ij})=\Sigma_{ii}\Sigma_{jj}-\Sigma_{ij}^2>0$ for all $i\neq j$ implies that $\Sigma_{11}^2\Sigma_{22}^2\Sigma_{33}^2 > \Sigma_{12}^2\Sigma_{13}^2\Sigma_{23}^2$, which clarifies that the last factor is also positive.
Alternatively, we can show this using the identity
\begin{align*}
(\Sigma_{11}&\Sigma_{22}\Sigma_{33})^2 -(\Sigma_{12}\Sigma_{13}\Sigma_{23})^2 = \\
&(\Sigma_{13}\Sigma_{23})^2 \det(\Sigma_{12,12}) + \Sigma_{11}\Sigma_{22}\Sigma_{23}^2 \det(\Sigma_{13,13}) + \Sigma_{11}^2\Sigma_{22}\Sigma_{33} \det(\Sigma_{23,23}) > 0. \end{align*}
We conclude that when $G$ is the 3-cycle, then for \emph{all} covariance matrices $\Sigma\in\mathcal{M}_{G,C} \subseteq \mathrm{PD}_3$ there is a unique matrix $M\in\mathbb{R}^E$ such that $\Sigma=\phi_{G,C}(M)$.  We will refer to this property as the 3-cycle defining a \emph{globally identifiable} model.  Note that our argument also shows that $\mathcal{M}_{G,C} = \mathrm{PD}_3$.
\end{example}

This small example already reveals some of the subtleties arising when analyzing identifiability of continuous Lyapunov models.  The problem can be reduced to determining whether a particular submatrix that is sparsely populated with covariances has full rank (see \Cref{lem:rankidentifiability} and \Cref{theo:kernelcrit}) but the resulting matrices have involved graph-dependent structures. The choice of ordering in \eqref{eq:col_order} is especially insightful for directed acyclic graphs. After sorting the nodes such that if $i\to j$ then $i\leq j$, any DAG yields a block upper-triangular matrix, as in \Cref{ex:complete DAG 3 nodes}, from which identifiability for all associated models follows (\Cref{theo:dagidenti}).
For cyclic graphs, however, the polynomials that appear while factoring determinants, as in \eqref{eq:detASigma3cycle}, quickly increase in complexity, and it is not easy to determine whether they are non-zero.  In our main result (\Cref{th:simple}) we thus consider alternative spectral arguments that use the stability of the drift matrix $M$ in order to derive identifiability.

\section{Notions of identifiability}
\label{sec:identifiabilityASigma}

We begin by recalling the concept of fibers that is useful to define the different notions of identifiability we study in subsequent sections.  Let $C\in\PD_p$, and let $\mathcal{M}_{G,C}$ be the graphical continuous Lyapunov model associated to a directed graph $G=(V,E)$ with vertex set $V=[p]$ and edge set $E$.  Let $\phi_{G,C}$ be the parametrization from~\eqref{eq:map}.  The \emph{fiber} of a matrix $M_0 \in \mathrm{Stab}(E)$ is the set 
\begin{equation}
\label{eq:fiber}
    \mathcal{F}_{G,C}(M_0)=\lbrace M\in \mathrm{Stab}(E): \phi_{G,C}(M)=\phi_{G,C}(M_0)\rbrace.
\end{equation}
In other words, a fiber comprises all drift matrices $M\in\mathbb{R}^E$ whose Lyapunov equation (for the fixed matrix $C\in\PD_p$) is solved by a given covariance matrix $\Sigma$.

We will consider three natural notions of identifiability.

\begin{definition}
\label{def:notions_identifiability}
Let $\mathcal{M}_{G,C}$ be the graphical continuous Lyapunov model given by a directed graph $G=(V,E)$ with $V=[p]$ and $C\in\PD_p$. The model $\mathcal{M}_{G,C}$ is 
\begin{itemize}
\item[(i)] \emph{globally identifiable} if $\mathcal{F}_{G,C}(M_{0})=\{M_0\}$ for all $M_0 \in \mathrm{Stab}(E)$;  
\item[(ii)] \emph{generically identifiable} if $\mathcal{F}_{G,C}(M_{0})=\{M_0\}$ for almost all $M_0 \in \mathrm{Stab}(E)$, i.e., the matrices with $\mathcal{F}_{G,C}(M_{0})\not=\{M_0\}$ form a 
Lebesgue null set in 
$\mathbb{R}^E$;
\item[(iii)] \emph{non-identifiable} if $|\mathcal{F}_{G,C}(M_{0})|=\infty$ for all $M_0 \in \mathrm{Stab}(E)$.
\end{itemize}
\end{definition}

\begin{remark}
The generic properties we prove in this paper are 
derived by showing that they hold outside  
a strict subset of $\mathrm{Stab}(E)$ that is described by polynomials in the entries of the drift matrix; see e.g.~\Cref{lem:rankidentifiability}.  Hence, in a generically identifiable model the exception set is not merely a set of Lebesgue measure zero, but also a lower-dimensional algebraic subset of $\mathrm{Stab}(E)$.
\end{remark}

\begin{remark}
Characterizing identifiability is also a key problem for standard directed graphical models; see  \cite{drton2018} and \citet[Chap.~16]{Sullivant2018} for a discussion of the different notions of identifiability in this context.  
For standard graphical models, necessary and sufficient conditions for global identifiability have been obtained \citep{Drton2011}.  However, many models of interest are not globally identifiable, and much 
 work has also gone into criteria for generic identifiability 
\citep{brito2006graphical, kumor2019, foygel2012half,drton2016generic}. \end{remark}

The 3-cycle from  \Cref{ex:Identifiability3cycle} is an example of global identifiability.  Under global identifiability, no two distinct stable matrices may define the same covariance matrix in the model given by the graph. Unfortunately, this is not always the case.  

\begin{example}
\label{ex:2cycle}
Consider the 2-cycle $G=(V,E)$ with $V=\lbrace 1,2 \rbrace$ and $E=\lbrace 1\to 1, 2\to 2, 1 \to 2, 2\to 1 \rbrace $.  Then $\phi_{G,C}$ maps the 4-dimensional parameter space $\mathrm{Stab}(E)$ to the 3-dimensional $\mathrm{PD}_2$-cone.  Hence, when computing any fiber we have to solve a linear system that is underdetermined, with 3 equations in 4 unknowns.  Therefore, 
$\mathcal{M}_{G,C}$ is non-identifiable, no matter the choice of $C\in\PD_2$.
\end{example}

The example just given generalizes as follows:
\begin{lemma}\label{lem:dim}
Let $G=(V,E)$ be a directed graph with vertex set $V=[p]$, and let $C\in\PD_p$. If $|E|>\dim(\mathcal{M}_{G,C})$, i.e., the number of free parameters in $\mathrm{Stab}(E)$ is greater than the dimension of the model, then $\mathcal{M}_{G,C}$ is non-identifiable.
In particular, all graphs with $|E|> p(p+1)/2$ give non-identifiable models.

\begin{proof}
By the Hurwitz criterion, the set of sparse stable matrices $\mathrm{Stab}(E)$ is semialgebraic, see \citet[Theorem 2.3.3]{horn_johnson_1991}.  As its dimension is 
 $\dim(\mathrm{Stab}(E))=\vert E\vert>\dim(\mathcal{M}_{G,C})$, it follows that
the rational map $\phi_{G,C}$ defined on $\mathrm{Stab}(E)$ is generically infinite-to-one; see, e.g., \citet[Lemma 2.5]{BDSW22}.  Apply \cref{lem:rankidentifiability} below to conclude that all fibers are infinite.
\end{proof}
\end{lemma}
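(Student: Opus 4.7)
The plan is to work directly with the vectorized Lyapunov equation developed in \Cref{ex:Identifiability3cycle},
\[
A(\Sigma)_{\cdot,E}\,\mathrm{vec}(M)_E = -\mathrm{vech}(C),
\]
with $\Sigma = \phi_{G,C}(M_0)$. The fiber $\mathcal{F}_{G,C}(M_0)$ is then the intersection of the affine solution set of this linear system in $\mathbb{R}^E$ with the open subset $\mathrm{Stab}(E) \subseteq \mathbb{R}^E$. The game is to show that this affine solution set has strictly positive dimension under the hypothesis $|E| > \dim(\mathcal{M}_{G,C})$; once this is done, the fact that $M_0$ lies in the open set $\mathrm{Stab}(E)$ implies that the intersection is an open neighborhood of $M_0$ inside a positive-dimensional affine flat, hence infinite.

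First I would dispatch the ``in particular'' case $|E| > p(p+1)/2$: the matrix $A(\Sigma)_{\cdot,E}$ has only $p(p+1)/2$ rows, so $\mathrm{rank}(A(\Sigma)_{\cdot,E}) \leq p(p+1)/2 < |E|$, and the affine solution set has dimension at least $|E| - p(p+1)/2 > 0$.

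For the general statement, the key step is to establish
\[
\mathrm{rank}(A(\Sigma)_{\cdot,E}) \leq \dim(\mathcal{M}_{G,C}) \qquad \text{for every } \Sigma \in \mathcal{M}_{G,C}.
\]
To prove this, I would compute the Jacobian of $\phi_{G,C}$ at $M$. Differentiating $M\Sigma + \Sigma M^{\top} = -C$ with respect to $M$ (with $C$ fixed) and using the stability of $M$ to invert the Lyapunov operator on symmetric matrices, one sees that the kernel of $d\phi_{G,C}|_M$ coincides with the null space of $A(\Sigma)_{\cdot,E}$. Hence $\mathrm{rank}\bigl(d\phi_{G,C}|_M\bigr) = \mathrm{rank}\bigl(A(\Sigma)_{\cdot,E}\bigr)$. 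Since the image of $\phi_{G,C}$ is contained in $\mathcal{M}_{G,C}$, its Jacobian rank is bounded above by $\dim(\mathcal{M}_{G,C})$ at every $M$, giving the required uniform bound and closing the argument exactly as in the easy case.

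The main obstacle is precisely that \Cref{def:notions_identifiability}(iii) demands \emph{every} fiber be infinite, not merely the generic one. A bare dimension count of domain versus image only delivers generic non-identifiability, so an argument bounding $\mathrm{rank}(A(\Sigma)_{\cdot,E})$ pointwise in $\Sigma$ is essential; this is what the Jacobian-rank identity above supplies, by translating the global bound $\dim(\mathrm{image}(\phi_{G,C})) \leq \dim(\mathcal{M}_{G,C})$ into a pointwise statement about the linear system defining each fiber.
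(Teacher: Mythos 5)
Your argument is correct, but it takes a different route from the paper's. The paper's proof is indirect: it invokes a general fact about semialgebraic maps to conclude that $\phi_{G,C}$ is \emph{generically} infinite-to-one, and then uses the final assertion of \Cref{lem:rankidentifiability} (not generically identifiable $\Rightarrow$ non-identifiable), whose own proof rests on the rationality of the map $M\mapsto$ maximal minors of $A(\phi_{G,C}(M))_{\cdot,E}$ — a rational function vanishing on a positive-measure set vanishes identically, so no $\Sigma$ in the model has $A(\Sigma)_{\cdot,E}$ of full column rank, and every fiber is a relatively open subset of a positive-dimensional affine flat. You instead prove the pointwise rank bound directly: differentiating the Lyapunov equation and inverting the Lyapunov operator $S\mapsto MS+SM^\top$ (legitimate since $M$ is stable) correctly identifies $\ker d\phi_{G,C}|_M$ with $\ker A(\Sigma)_{\cdot,E}$, so that $\mathrm{rank}\,A(\Sigma)_{\cdot,E} = \mathrm{rank}\,d\phi_{G,C}|_M \le \dim(\mathcal{M}_{G,C}) < |E|$ at \emph{every} parameter value. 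This is more self-contained (it dispenses with the external citation and with the detour through generic identifiability) and, as you note, it correctly addresses the point that definition (iii) requires every fiber to be infinite. The one step you state without justification is that the Jacobian rank at any point is bounded by the dimension of the image: this is standard (if $\mathrm{rank}\,d\phi|_{M_0}=r$, compose with a projection onto $r$ suitable coordinates to get a submersion near $M_0$, so the image contains a set projecting onto an open subset of $\mathbb{R}^r$ and hence has semialgebraic dimension at least $r$), but a sentence to this effect should be added, since it is the exact point where your argument does the work that the paper delegates to its cited lemma. Your handling of the ``in particular'' case via the row count of $A(\Sigma)_{\cdot,E}$ is fine.
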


A straightforward but very useful fact when studying global identifiability is that if a graph $G=(V,E)$ yields a globally identifiable model then so does every subgraph $H=(V,E')$, $E' \subseteq E$, that is obtained by removing edges of the form $i\rightarrow j$ with $i\neq j$. We record this fact as:
\begin{proposition}
\label{prop:subgraph}
Let $\mathcal{M}_{G,C}$ be a globally identifiable model given by a directed graph $G=(V,E)$ with $V=[p]$ and $C\in\PD_p$. Let $E'\subset E$ be a subset of the edges.  Then the model $\mathcal{M}_{H,C}$ defined by the subgraph $H=(V,E')$ is globally identifiable.
\end{proposition}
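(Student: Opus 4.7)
The plan is to exploit the natural inclusion of parameter spaces. Because $E' \subseteq E$ and both edge sets contain every self-loop $i \to i$, the sparsity class $\mathbb{R}^{E'}$ embeds into $\mathbb{R}^{E}$: any matrix $M \in \mathbb{R}^{E'}$ is already a matrix in $\mathbb{R}^E$ (the extra allowed entries indexed by $E \setminus E'$ just happen to be zero). Stability is an intrinsic spectral property of $M$ and does not depend on which sparsity class we view it in, so this embedding restricts to an inclusion $\mathrm{Stab}(E') \subseteq \mathrm{Stab}(E)$.

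The next observation is that the Lyapunov parametrization also does not depend on the sparsity class. For fixed $C$, the value $\phi_{G,C}(M) = \Sigma(M,C)$ is determined solely by $M$ (and $C$) via \eqref{eq:lyapunoveq}. Thus, on the smaller domain $\mathrm{Stab}(E')$, the map $\phi_{H,C}$ is simply the restriction of $\phi_{G,C}$. In particular, for every $M_0 \in \mathrm{Stab}(E')$ we have the fiber inclusion
\begin{equation*}
\mathcal{F}_{H,C}(M_0) \;\subseteq\; \mathcal{F}_{G,C}(M_0),
\end{equation*}
since any $M \in \mathrm{Stab}(E')$ with $\phi_{H,C}(M) = \phi_{H,C}(M_0)$ is an element of $\mathrm{Stab}(E)$ satisfying $\phi_{G,C}(M) = \phi_{G,C}(M_0)$.

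Now the conclusion is immediate. Global identifiability of $\mathcal{M}_{G,C}$ means $\mathcal{F}_{G,C}(M_0) = \{M_0\}$ for every $M_0 \in \mathrm{Stab}(E)$, and in particular for every $M_0 \in \mathrm{Stab}(E') \subseteq \mathrm{Stab}(E)$. Combined with the inclusion above and the trivial fact $M_0 \in \mathcal{F}_{H,C}(M_0)$, this forces $\mathcal{F}_{H,C}(M_0) = \{M_0\}$ for all $M_0 \in \mathrm{Stab}(E')$, which is precisely global identifiability of $\mathcal{M}_{H,C}$.

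There is essentially no technical obstacle here; the statement is almost definitional once the embedding $\mathrm{Stab}(E') \hookrightarrow \mathrm{Stab}(E)$ is made explicit. The only subtle point worth emphasizing in the write-up is the hypothesis that we remove only non-loop edges: this guarantees that $E'$ still contains all self-loops, so $H$ is a legitimate graph in the sense of \Cref{def:model} and the embedding indeed lands in $\mathrm{Stab}(E)$.
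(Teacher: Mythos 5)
Your argument is correct and is exactly the paper's proof: both rest on the inclusion $\mathrm{Stab}(E')\subseteq\mathrm{Stab}(E)$ and the resulting fiber inclusion $\mathcal{F}_{H,C}(M_0)\subseteq\mathcal{F}_{G,C}(M_0)=\{M_0\}$. Your write-up merely spells out the intermediate steps (that $\phi_{H,C}$ is the restriction of $\phi_{G,C}$ and that self-loops are retained) in more detail than the paper does.
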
 
\begin{proof}
It holds that $\mathrm{Stab}(E')\subseteq\mathrm{Stab}(E)$.  Therefore, for every matrix $M_0\in\mathrm{Stab}(E')$, we have $\mathcal{F}_{H,C}(M_0)\subseteq \mathcal{F}_{G,C}(M_0)=\{M_0\}$, where the last equality is due to the assumed global identifiability of $\mathcal{M}_{G,C}$.
\end{proof}

In the case where $C$ is diagonal, further conclusions can be made.

\begin{proposition}
\label{prop:Cdiag}
Let $G=(V,E)$ be a directed graph with $V=[p]$. Let $C\in\PD_p$ be diagonal, and let $I_p$ be the $p\times p$ identity matrix.  Then the models for $C$ versus $I_p$ are isomorphic, and so are their fibers:
\begin{itemize}
    \item[(i)] $\mathcal{M}_{G,C}=C^{1/2}\mathcal{M}_{G,I_p} C^{1/2}$, and
    \item[(ii)] $\mathcal{F}_{G,C}(M)=\mathcal{F}_{G,I_p}(C^{1/2}MC^{-1/2})$ for all $M\in\mathrm{Stab}(E)$.
\end{itemize}
In particular, $\mathcal{M}_{G,C}$ is globally/generically identifiable if and only if  $\mathcal{M}_{G,I_p}$ is globally/generically identifiable. \end{proposition}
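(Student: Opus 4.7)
The strategy is a change of variables by the diagonal positive-definite matrix $C^{1/2}$, reducing the Lyapunov equation with volatility $C$ to one with volatility $I_p$. The crucial point is that, since $C$ is diagonal, conjugation by $C^{1/2}$ (or $C^{-1/2}$) preserves both the zero pattern and the spectrum of a matrix, so it restricts to a linear bijection $\mathrm{Stab}(E) \to \mathrm{Stab}(E)$.

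First I would set $\tilde{\Sigma} := C^{-1/2} \Sigma C^{-1/2}$ and introduce the matrix $\tilde{M}$ obtained from $M$ by conjugating with $C^{\pm 1/2}$. Because $C^{\pm 1/2}$ are diagonal, the off-diagonal entries of $\tilde{M}$ differ from those of $M$ only by nonzero scalar factors of the form $\sqrt{c_j/c_i}$, so $\tilde{M}$ has exactly the same zero pattern as $M$; and $\tilde{M}$ is similar to $M$, so they share the same spectrum, and in particular $\tilde{M}$ is stable iff $M$ is. Left- and right-multiplying $M \Sigma + \Sigma M^\top + C = 0$ by $C^{-1/2}$ and inserting $C^{\pm 1/2} C^{\mp 1/2} = I_p$ at the right places, the equation reduces to
\begin{equation*}
\tilde{M}\,\tilde{\Sigma} + \tilde{\Sigma}\,\tilde{M}^\top + I_p = 0.
\end{equation*}
Thus $\Sigma = \phi_{G, C}(M)$ if and only if $\tilde{\Sigma} = \phi_{G, I_p}(\tilde{M})$. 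Letting $M$ range over $\mathrm{Stab}(E)$ yields (i), since then $\tilde{\Sigma} = C^{-1/2} \Sigma C^{-1/2}$ ranges over $\mathcal{M}_{G, I_p}$. For (ii), the equivalence $\phi_{G, C}(M_1) = \phi_{G, C}(M_2) \iff \phi_{G, I_p}(\tilde{M}_1) = \phi_{G, I_p}(\tilde{M}_2)$ shows that the conjugation map puts the fiber $\mathcal{F}_{G, C}(M)$ in bijection with a fiber in the $I_p$ model; matching the precise expression $C^{1/2} M C^{-1/2}$ in the statement is then just a matter of tracking which direction of conjugation is applied.

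The concluding identifiability equivalence is immediate: global identifiability asks that every fiber be a singleton, and generic identifiability asks that the set of $M$ with non-singleton fiber be Lebesgue-null in $\mathbb{R}^E$. Both conditions are invariant under a linear automorphism of $\mathbb{R}^E$ with nonzero Jacobian, and the conjugation map is exactly such an automorphism (its Jacobian determinant is a nonzero product of ratios $c_j/c_i$). I foresee no serious obstacle; the proof is essentially a bookkeeping exercise. The only point worth emphasizing is that diagonality of $C$ is used in an essential way: if $C$ were only positive definite but not diagonal, then the conjugation $M \mapsto C^{-1/2} M C^{1/2}$ would generally destroy the sparsity prescribed by $E$, and the reduction would fail.
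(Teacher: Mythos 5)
Your proposal is correct and follows essentially the same route as the paper: the paper also defines the conjugation maps $M\mapsto C^{-1/2}MC^{1/2}$ and $\Sigma\mapsto C^{-1/2}\Sigma C^{-1/2}$, observes that diagonality of $C$ makes the former an automorphism of $\mathbb{R}^E$ preserving $\mathrm{Stab}(E)$, and transforms the Lyapunov equation exactly as you do. The only cosmetic difference is that the paper leaves the sparsity/stability preservation and the final identifiability equivalence implicit, whereas you spell them out.
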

\begin{proof}
Since $C$ is diagonal, the similarity transformation $\tau_1:M\mapsto C^{-1/2}MC^{1/2}$ is an automorphism of $\mathbb{R}^E$, with  $\tau_1(\mathrm{Stab}(E))=\mathrm{Stab}(E)$.  Define a second linear map $\tau_2:\Sigma\mapsto C^{-1/2}\Sigma C^{-1/2}$, an automorphism of the space of symmetric matrices with $\tau_2(\PD_p)=\PD_p$.  Now
\begin{multline*}
 M\Sigma + \Sigma M^\top +C=0 \iff \\
(C^{-1/2}MC^{1/2})(C^{-1/2}\Sigma C^{-1/2}) + (C^{-1/2}\Sigma C^{-1/2})( C^{-1/2} M C^{1/2})^\top+I_p=0.
\end{multline*}
Thus, $\mathcal{M}_{G,I_p}=\tau_2(\mathcal{M}_{G,C})$ and 
$\mathcal{F}_{G,I_p}(M)=\mathcal{F}_{G,C}(\tau_1^{-1}(M))$.
\end{proof}

In \Cref{prop:subgraph} only edges are removed when forming a subgraph. When $C$ is diagonal we may strengthen the result to subgraphs in which we also remove vertices; compare  \citet[Lemma 1]{Drton2011} in the context of standard graphical models.

\begin{proposition}
\label{prop:subgraph:Cdiag}
Let $G=(V,E)$ be a directed graph with $V=[p]$, and let $H=(V',E')$ be a subgraph with $V' \subseteq V$ and $E' \subseteq E$.  If the model $\mathcal{M}_{G,C}$ is globally identifiable for a diagonal matrix $C\in\PD_p$, then $\mathcal{M}_{H,C'}$ is globally identifiable for all diagonal matrices $C'\in\PD_{p'}$, where $p'=|V'|$.
\end{proposition}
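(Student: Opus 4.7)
My plan is to reduce to a normalized setting and then perform a block-diagonal lifting argument. By \Cref{prop:Cdiag}, global identifiability of $\mathcal{M}_{G,C}$ for diagonal $C$ is equivalent to global identifiability of $\mathcal{M}_{G,I_p}$, so I may assume $C = I_p$ and $C' = I_{p'}$. Moreover, \Cref{prop:subgraph} already allows removing non-loop edges while preserving global identifiability, so it suffices to handle the case in which $H$ is the vertex-induced subgraph of $G$ on $V'$, i.e., $E' = \{(i,j) \in E : i,j \in V'\}$ together with the self-loops at $V'$.

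The core step is then: assuming $\mathcal{M}_{G, I_p}$ is globally identifiable, prove that $\mathcal{M}_{H, I_{p'}}$ is globally identifiable. Writing $W := V \setminus V'$, for any $M_0' \in \mathrm{Stab}(E')$ I would construct the block-diagonal lift
\[
M_0 := \begin{pmatrix} M_0' & 0 \\ 0 & -I_W \end{pmatrix},
\]
which lies in $\mathbb{R}^E$ (zero entries are always permissible) and is stable since its spectrum is $\mathrm{spec}(M_0') \cup \{-1\}$. A block-wise expansion of the Lyapunov equation $M_0 \Sigma_0 + \Sigma_0 M_0^\top = -I_p$ should show that $\Sigma_0 := \phi_{G, I_p}(M_0)$ is block-diagonal, with $V' \times V'$ block equal to $\phi_{H, I_{p'}}(M_0')$ and $W \times W$ block equal to $\tfrac{1}{2} I_W$. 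If a second $\widetilde{M}' \in \mathrm{Stab}(E')$ yielded the same covariance under $\phi_{H, I_{p'}}$, the same lifting procedure would produce $\widetilde{M} \in \mathrm{Stab}(E)$ with $\phi_{G, I_p}(\widetilde{M}) = \Sigma_0$, and global identifiability of $\mathcal{M}_{G, I_p}$ would force $\widetilde{M} = M_0$, and hence $\widetilde{M}' = M_0'$.

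The subtle point I expect to dwell on is the vanishing of the off-diagonal block $\Sigma_{V'W}$ of the lifted covariance. The relevant block equation reads $(M_0' - I_{V'}) \Sigma_{V'W} = 0$, and the conclusion $\Sigma_{V'W} = 0$ uses that $M_0'$ is stable, so $+1$ is not an eigenvalue and $M_0' - I_{V'}$ is invertible. This is precisely where diagonality of $C$ is indispensable: if $C$ had a nonzero $V' \times W$ block, the Lyapunov equation for the lifted system would not decouple into independent blocks, $\Sigma_{V'W}$ would generically be nonzero, and the lifting argument would break, which clarifies why the proposition restricts to diagonal volatility.
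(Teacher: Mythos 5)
Your proof is correct and follows essentially the same route as the paper's: normalize to $C=I_p$ via \Cref{prop:Cdiag}, reduce via \Cref{prop:subgraph}, and exploit that a drift matrix with no edges between $V'$ and $V\setminus V'$ yields a block-diagonal covariance, so identifiability transfers from $G$ to the induced subgraph. The only cosmetic difference is that the paper removes one isolated vertex at a time and observes the fibers are in bijection (with $m_{pp}$ recovered as $-1/(2\Sigma_{pp})$), whereas you lift all of $V\setminus V'$ at once with diagonal entries fixed to $-1$.
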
 
\begin{proof}
By \Cref{prop:subgraph}, it suffices to prove that removing an isolated vertex from $G$ preserves global identifiability of the model for $C$ diagonal. 
By \Cref{prop:Cdiag}, we may assume that $C=I_p$ and $C'=I_{p-1}$, where $p$ is an isolated node of $G$. 
Let $M\in\mathrm{Stab}(E)$, and let $M_{[p-1],[p-1]}$ be the submatrix comprising the first $p-1$ rows and columns.  
Since $p$ is isolated,
the $p$th row and column of $M$ is zero with the exception of the diagonal entry $m_{pp}$.  It is not difficult to see that  $\Sigma=\phi_{G,I_p}(M)$ also has its $p$th row and column equal to zero except for the diagonal entry which equals $\Sigma_{pp}=-1/(2m_{pp})$.  Hence, the entry $m_{pp}$ is always uniquely determined by $\Sigma$, and we conclude that the cardinality of the fiber $\mathcal{F}_{G,I_p}(M)$ is equal to the cardinality of $\mathcal{F}_{H,I_{p-1}}(M_{[p-1],[p-1]})$.  Since every matrix in $\mathrm{Stab}(E')$ is a submatrix $M_{[p-1],[p-1]}$ of a matrix $M\in\mathrm{Stab}(E)$, the model $\mathcal{M}_{H,I_{p-1}}$ is globally identifiable.  
\end{proof}

Combining \Cref{prop:subgraph:Cdiag} with \Cref{ex:2cycle}, we obtain that the graph of a globally identifiable model cannot contain any 2-cycles.  

\begin{definition}
A directed graph $G=(V,E)$ is \emph{simple} if it is free of 2-cycles, i.e., there do not exist two distinct nodes $i,j \in V$ such that $i \to j \in E$ and $j \to i \in E$. Otherwise, we call $G$ non-simple. 
\end{definition}

\begin{proposition}
\label{prop:non-simple}
If a directed graph $G=(V,E)$, $V=[p]$, defines a globally identifiable model $\mathcal{M}_{G,C}$ when $C\in\PD_p$ is diagonal, then $G$ must be simple.
\end{proposition}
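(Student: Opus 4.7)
The plan is to argue by contraposition: assuming $G$ is non-simple I will exhibit an induced subgraph on two nodes that is already non-identifiable, and then invoke \Cref{prop:subgraph:Cdiag} to propagate this failure up to $G$. The key earlier facts I will lean on are \Cref{ex:2cycle}, which says the 2-cycle gives a non-identifiable (in particular not globally identifiable) model for any $C\in\PD_2$, and \Cref{prop:subgraph:Cdiag}, which, when applied in its contrapositive form, says: if there exists any diagonal $C'\in\PD_{p'}$ such that $\mathcal{M}_{H,C'}$ fails to be globally identifiable, then $\mathcal{M}_{G,C}$ cannot be globally identifiable for any diagonal $C\in\PD_p$.

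To execute this, suppose $G$ is non-simple, so there are distinct nodes $i,j\in V$ with both $i\to j$ and $j\to i$ belonging to $E$. Form the induced subgraph $H=(V',E')$ on $V'=\{i,j\}$, whose edge set $E'=\{i\to i,\, j\to j,\, i\to j,\, j\to i\}$ lies in $E$ since self-loops are always present in our graphs by \Cref{def:model}. Up to relabeling of the nodes, $H$ is exactly the 2-cycle of \Cref{ex:2cycle}, so for every diagonal $C'\in\PD_2$ the model $\mathcal{M}_{H,C'}$ is non-identifiable. In particular, no such model can be globally identifiable. Then \Cref{prop:subgraph:Cdiag} (taken in contrapositive) forces $\mathcal{M}_{G,C}$ to fail global identifiability for every diagonal $C\in\PD_p$, contradicting our hypothesis. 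Thus $G$ must be simple.

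There is no real obstacle here: the proposition is essentially a bookkeeping consequence of the two prior results, once one notices that the induced subgraph on $\{i,j\}$ of a graph containing the pair $i\to j, j\to i$ is exactly the bad 2-cycle example. The only subtlety worth flagging in the write-up is that the conclusion of \Cref{ex:2cycle} (non-identifiability, i.e., infinite fibers) is strictly stronger than the failure of global identifiability, so no delicate quantifier manipulation is needed when feeding it into \Cref{prop:subgraph:Cdiag}.
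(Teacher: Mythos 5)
Your proof is correct and is exactly the paper's argument: the paper dispatches this proposition in one line by "combining \Cref{prop:subgraph:Cdiag} with \Cref{ex:2cycle}," which is precisely your route of passing to the induced two-node subgraph (the 2-cycle) and applying the contrapositive of the subgraph result for diagonal $C$. Your extra remark that the 2-cycle is in fact non-identifiable (not merely non-globally-identifiable) is a harmless strengthening that the paper leaves implicit.
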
 

\begin{remark}
 \Cref{prop:subgraph:Cdiag} and \Cref{prop:non-simple} may fail for non-diagonal $C\in\PD_p$. See \Cref{sec:C} for an example.
\end{remark}

Unfortunately, similar subgraph arguments cannot be made for generic instead of global identifiability. Indeed, generic identifiability may be lost but also restored when removing an edge.  \Cref{ex:subgraphnotiden} illustrates this phenomenon.

\section{Rank conditions}
\label{sec:asigmastru}

In this section, we discuss solving the Lyapunov equation \eqref{eq:lyapunoveq} for the generally non-symmetric drift matrix $M$ given the symmetric matrices $\Sigma$ and $C$.  We will proceed by vectorizing the Lyapunov equation, and we will
state necessary and sufficient conditions for identifiability based on the ranks of submatrices of the coefficient matrix $A(\Sigma)$ of the vectorized Lyapunov equation. 

First, recall that when the matrices $M$ and $C$ are given, the continuous Lyapunov equation from~\eqref{eq:lyapunoveq} is uniquely solvable for the symmetric matrix $\Sigma$ if and only if 
no two eigenvalues of $M$ add up to zero.  This well known fact can be shown by vectorizing the equation to
\begin{equation}
\label{eq:lyapunoveq usual vec}
  (I_p \otimes M + M \otimes I_p) \mathrm{vec}(\Sigma) = -\mathrm{vec}(C),
\end{equation}
where $\otimes$ is the Kronecker product and $\mathrm{vec}(\cdot)$ is the columnwise vectorization of a matrix; see, e.g., \cite{bernstein2016}.  The coefficient matrix $I_p \otimes M + M \otimes I_p$ is a Kronecker sum, and it follows that its eigenvalues are the pairwise sums of the eigenvalues of $M$. 
If we now additionally assume that $C$ is positive definite, then  Lyapunov's theorem \citep[Theorem 2.2.1]{horn_johnson_1991} yields  that the Lyapunov equation from~\eqref{eq:lyapunoveq} has a unique positive definite solution $\Sigma$ if and only if $M$ is a stable matrix.

However, solving for $M$ given two symmetric (and in our context positive definite) matrices $\Sigma$ and $C$ is a more difficult question. In general, it is not possible to have a unique solution for $M$ due to the dimensionality problems mentioned in \Cref{lem:dim}.
The graphical perspective of the Lyapunov models motivates considering sparse matrices $M$ and asking the solvability question in a new light, as we illustrated in \Cref{ex:Identifiability3cycle}.

\begin{lemma}
\label{lem:ASigma}
Vectorizing the Lyapunov equation~\eqref{eq:lyapunoveq}, we obtain the system 
\begin{equation}
 \label{eq:ASigmaraw}
    ((\Sigma \otimes I_{p})+(I_{p} \otimes \Sigma) K_p) \mathrm{vec}(M) = -\mathrm{vec}(C),
\end{equation}
where $K_p$ is the $p \times p$ commutation matrix.
\end{lemma}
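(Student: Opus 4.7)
The plan is to vectorize the Lyapunov equation $M\Sigma + \Sigma M^{\top} + C = 0$ term by term using standard identities for the $\mathrm{vec}$ operator and the commutation matrix $K_p$, and then combine. Recall the two facts we need: (a) $\mathrm{vec}(AXB) = (B^{\top} \otimes A)\, \mathrm{vec}(X)$ for conformable matrices $A,X,B$; and (b) the commutation matrix $K_p$ satisfies $\mathrm{vec}(X^{\top}) = K_p\, \mathrm{vec}(X)$ for any $p\times p$ matrix $X$. Both are entirely standard (see, e.g., \cite{bernstein2016}).

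First I would handle $\mathrm{vec}(M\Sigma)$. Writing $M\Sigma = I_p \cdot M \cdot \Sigma$ and applying identity (a) gives
\begin{equation*}
\mathrm{vec}(M\Sigma) = (\Sigma^{\top} \otimes I_p)\,\mathrm{vec}(M) = (\Sigma \otimes I_p)\,\mathrm{vec}(M),
\end{equation*}
where the last equality uses that $\Sigma$ is symmetric. Next I would vectorize $\Sigma M^{\top}$. Writing $\Sigma M^{\top} = \Sigma \cdot M^{\top} \cdot I_p$ and applying identity (a),
\begin{equation*}
\mathrm{vec}(\Sigma M^{\top}) = (I_p \otimes \Sigma)\,\mathrm{vec}(M^{\top}) = (I_p \otimes \Sigma)\, K_p\, \mathrm{vec}(M),
\end{equation*}
where the second equality invokes identity (b).

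Finally I would add the two vectorizations and move $\mathrm{vec}(C)$ to the right-hand side:
\begin{equation*}
\bigl((\Sigma \otimes I_p) + (I_p \otimes \Sigma)\, K_p\bigr)\,\mathrm{vec}(M) = -\mathrm{vec}(C),
\end{equation*}
which is exactly the claimed equation \eqref{eq:ASigmaraw}. There is no serious obstacle here; the only subtlety worth flagging is remembering to use symmetry of $\Sigma$ when replacing $\Sigma^{\top}$ by $\Sigma$ in the first term, and remembering that the commutation matrix is needed precisely because $\mathrm{vec}$ stacks columns while $M^{\top}$ interchanges the roles of rows and columns.
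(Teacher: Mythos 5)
Your proposal is correct and follows essentially the same route as the paper's proof: vectorize each term via $\mathrm{vec}(AXB)=(B^{\top}\otimes A)\,\mathrm{vec}(X)$, use the commutation matrix to rewrite $\mathrm{vec}(M^{\top})$ as $K_p\,\mathrm{vec}(M)$, and invoke symmetry of $\Sigma$ to replace $\Sigma^{\top}$ by $\Sigma$. The only difference is that you spell out these standard identities a bit more explicitly than the paper does.
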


The commutation matrix $K_p$ is the symmetric permutation matrix that transforms the vectorization of a $p\times p$ matrix to the vectorization of its transpose \citep[p.~54]{neudecker1999}. 
\begin{proof}[Proof of \Cref{lem:ASigma}]
 It holds that
     \begin{multline*}
        \mathrm{vec}(M\Sigma+\Sigma M^{\top})= \mathrm{vec}(M\Sigma)+\mathrm{vec}(\Sigma M^{\top})\\
        =(\Sigma^{\top} \otimes I_{p}) \mathrm{vec}(M)+(I_{p} \otimes \Sigma) \mathrm{vec}(M^{\top})
        =((\Sigma \otimes I_{p})+(I_{p} \otimes \Sigma)K_{p}) \mathrm{vec}(M).
    \end{multline*}
\end{proof}

The Lyapunov equation \eqref{eq:lyapunoveq} is symmetric and therefore $p(p-1)/2$ equations of the equation system \eqref{eq:ASigmaraw} are redundant.

\begin{definition}
\label{def:ASigma}
Given a $p\times p$ symmetric matrix $\Sigma$, we define the $p(p+1)/2\times p^2$ matrix $A(\Sigma)$ by selecting the rows of
\begin{equation*}
    (\Sigma \otimes I_{p})+(I_{p} \otimes \Sigma) K_{p}
\end{equation*}
indexed by pairs $(k,l)$ with $k\le l$.
\end{definition}

Let $\mathrm{vech}(C)=(C_{kl}:k\le l)$ be the half-vectorization of the symmetric matrix $C$.  Then we can write the Lyapunov equation as
\begin{equation*}
    A(\Sigma) \mathrm{vec}(M)=-\mathrm{vech}(C).
\end{equation*}
As noted, we index the rows of $A(\Sigma)$ by pairs $(k,l)$ with $k\le l$.  To index the columns of $A(\Sigma)$ we will use the potential edges $i\to j$, where we recall that the edge $i\to j$ corresponds to the entry $m_{ji}$ of the matrix $M$. 

\Cref{ex:Identifiability3cycle} displayed $A(\Sigma)$ for the case of $p=3$.  In general, we have
\begin{equation}
\label{eq:doubleindex}
   A(\Sigma)_{(k,l),i\to j}= 
   \begin{cases}
   0, \quad &\text{if} \quad j \neq k,l; \\
   \Sigma_{li}, \quad &\text{if} \quad j= k,\, k \neq l;\\
   \Sigma_{ki}, \quad &\text{if} \quad j=l, \, l \neq k;\\
   2\Sigma_{ji}, \quad &\text{if} \quad j=k=l.
   \end{cases}
\end{equation}

Any 
specific graphical continuous Lyapunov model assumes that $M$ has non-zero entries only for pairs $(j,i)$ for which the underlying graph contains the edge $i\to j$.  We are thus led to select a subset of columns of the coefficient matrix $A(\Sigma)$ when studying solvability of the Lyapunov equation.
By the next lemma, generic and global identifiability of a graphical continuous Lyapunov model are equivalent to rank conditions on the relevant submatrix of $A(\Sigma)$.   

\begin{lemma}
\label{lem:rankidentifiability}
Let $G=(V,E)$ be a directed graph with $V=[p]$, and let $C\in\PD_p$.  
Let $A(\Sigma)_{\cdot,E}$ be the submatrix of $A(\Sigma)$ obtained by selecting the columns indexed by the edges in $E$.
Then the model $\mathcal{M}_{G,C}$ is 
\begin{itemize}
    \item[(i)] globally identifiable if and only if $A(\Sigma)_{\cdot, E}$ has full column rank $|E|$ for all $\Sigma \in \mathcal{M}_{G,C}$;
    \item[(ii)] generically identifiable if and only if there exists a matrix $\Sigma \in \mathcal{M}_{G,C}$ such that $A(\Sigma)_{\cdot, E}$ has full column rank $|E|$.
\end{itemize}
If $\mathcal{M}_{G,C}$ is not generically identifiable, then it is non-identifiable.
\end{lemma}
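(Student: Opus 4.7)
The plan is to translate the fiber $\mathcal{F}_{G,C}(M_0)$ into a kernel of $A(\Sigma)_{\cdot,E}$ and then exploit the openness of $\mathrm{Stab}(E)$ in $\mathbb{R}^E$ to upgrade every non-trivial kernel into an infinite family of preimages.

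First I would let $\mathrm{vec}_E(M)\in\mathbb{R}^{|E|}$ denote the sub-vector of $\mathrm{vec}(M)$ indexed by $E$, so that for $M\in\mathbb{R}^E$ the vectorized Lyapunov equation from \Cref{lem:ASigma} collapses to $A(\Sigma)_{\cdot,E}\,\mathrm{vec}_E(M)=-\mathrm{vech}(C)$. Now fix $M_0\in\mathrm{Stab}(E)$ and set $\Sigma_0=\phi_{G,C}(M_0)$. Any $M\in\mathrm{Stab}(E)$ lies in $\mathcal{F}_{G,C}(M_0)$ exactly when $\phi_{G,C}(M)=\Sigma_0$, equivalently when $A(\Sigma_0)_{\cdot,E}(\mathrm{vec}_E(M)-\mathrm{vec}_E(M_0))=0$. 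Hence
$$\mathcal{F}_{G,C}(M_0) \;=\; \bigl(M_0+\ker A(\Sigma_0)_{\cdot,E}\bigr)\cap\mathrm{Stab}(E),$$
where $M_0+\ker A(\Sigma_0)_{\cdot,E}$ is interpreted via $\mathrm{vec}_E$. Since $\mathrm{Stab}(E)$ is open in $\mathbb{R}^E$ (see \Cref{rem:dropStability}), I obtain a clean dichotomy: if $\ker A(\Sigma_0)_{\cdot,E}=\{0\}$ then $\mathcal{F}_{G,C}(M_0)=\{M_0\}$, whereas if some non-zero $v$ lies in $\ker A(\Sigma_0)_{\cdot,E}$ then the line $M_0+tv$ remains in $\mathrm{Stab}(E)$ for all sufficiently small $t$, forcing $|\mathcal{F}_{G,C}(M_0)|=\infty$.

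Part (i) is immediate from this dichotomy: global identifiability means every fiber is $\{M_0\}$, which holds iff $A(\Sigma)_{\cdot,E}$ has full column rank $|E|$ for every $\Sigma=\phi_{G,C}(M_0)$ ranging over $\mathcal{M}_{G,C}$. For part (ii) I would then consider the set
$$U \;=\; \{\,M\in\mathrm{Stab}(E): A(\phi_{G,C}(M))_{\cdot,E}\ \text{has full column rank}\,\}.$$
Because $\phi_{G,C}$ is rational in $M$ and full column rank amounts to the non-vanishing of at least one $|E|\times|E|$ minor, $U$ is the complement of a real algebraic set inside $\mathrm{Stab}(E)$. Thus $U$ is either empty or Zariski-dense, in which case its complement is a Lebesgue null set. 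By the dichotomy, $U$ coincides with the set of $M_0$ having singleton fiber, so generic identifiability is equivalent to $U\neq\emptyset$, which in turn is equivalent to the existence of some $\Sigma\in\mathcal{M}_{G,C}$ with $A(\Sigma)_{\cdot,E}$ of full column rank. Finally, if $\mathcal{M}_{G,C}$ is not generically identifiable then $U=\emptyset$, so $A(\phi_{G,C}(M_0))_{\cdot,E}$ has a non-trivial kernel for every $M_0\in\mathrm{Stab}(E)$, and the dichotomy yields an infinite fiber for every such $M_0$, establishing non-identifiability.

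The main obstacle, and really the only non-formal step, is converting a non-trivial kernel vector $v$ into an honest second element of the fiber; this requires the openness of $\mathrm{Stab}(E)$ to guarantee that the perturbation $M_0+tv$ remains stable. Once this is in hand, both equivalences and the final implication fall out of the affine description of the fiber and the Zariski-openness of the full-rank locus.
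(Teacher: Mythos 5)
Your proof is correct and follows essentially the same route as the paper: identify the fiber with the solution set of the linear system $A(\Sigma_0)_{\cdot,E}\,\mathrm{vec}_E(M)=-\mathrm{vech}(C)$, read off (i) from the rank condition, and deduce (ii) from the fact that the maximal minors are rational in $M$ so their common zero locus is either everything or a null set. The only cosmetic difference is that you invoke openness of $\mathrm{Stab}(E)$ to produce infinitely many stable solutions along $M_0+tv$, whereas the paper notes that the entire affine solution space already lies in $\mathrm{Stab}(E)$ (any $M\in\mathbb{R}^E$ solving the equation with $\Sigma_0,C$ positive definite is automatically stable); both justifications are valid.
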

\begin{proof}
Let $M_0\in\mathrm{Stab}(E)$, and let $\Sigma_0=\phi_{G,C}(M_0)$ be the associated covariance matrix.  The fiber $\mathcal{F}_{G,C}(M_0)$ is the set of all matrices $M\in\mathbb{R}^E$ with
\begin{equation}
\label{eq:Asigmaidenti}
    A(\Sigma_0)_{\cdot,E} \,\mathrm{vec}(M)_{E}=-\mathrm{vech}(C),
\end{equation}
where $\mathrm{vec}(M)_{E}$ is the subvector of $\mathrm{vec}(M)$ that comprises the entries indexed by $(j,i)$ with $i\to j\in E$.
Hence, $\mathcal{F}_{G,C}(M_0)=\{M_0\}$ precisely when $A(\Sigma_0)_{\cdot,E}$ has full column rank such that \eqref{eq:Asigmaidenti} has a unique solution.  Claim (i) is now evident. 

To prove (ii), note that  $A(\Sigma)_{\cdot,E}$ has full column rank if and only if the vector of all maximal minors of $A(\Sigma)_{\cdot,E}$ is non-zero.  By \eqref{eq:lyapunoveq usual vec}, the map $\phi_{G,C}$ is a rational map. Consequently, the map taking $M\in\mathrm{Stab}(E)$ to the maximal minors of $A(\phi_{G,C}(M))_{\cdot,E}$ is rational as well.  Now a rational map is non-zero outside a measure zero set if and only if there exists a single point where it is non-zero.  Consequently, the existence of $\Sigma\in\mathcal{M}_{G,C}$ with $A(\Sigma)_{\cdot,E}$ of full column rank implies generic identifiability of $\mathcal{M}_{G,C}$.

Finally, if $\mathcal{M}_{G,C}$ is not generically identifiable then the column rank of  $A(\Sigma_0)_{\cdot,E}$ is strictly smaller than $|E|$ for all $\Sigma_0 = \phi_{G,C}(M_0) \in\mathcal{M}_{G,C}$.    
The fiber $\mathcal{F}_{G,C}(M_0) \subseteq \mathrm{Stab}(E)$ is then the affine 
subspace of solutions to~\eqref{eq:Asigmaidenti} of dimension $\geq 1$.
Hence, $|\mathcal{F}_{G,C}(M_0)|=\infty$ for all $M_0\in\mathrm{Stab}(E)$, and $\mathcal{M}_{G,C}$ is non-identifiable.
\end{proof}

\section{Directed acyclic graphs}\label{sec:DAGs}

In this section, we prove that all models that are given by \emph{directed acyclic graphs} (DAGs) are globally identifiable.  In our setting, a DAG is a directed graph that does not contain any directed cycles other than the always present self-loops $i\to i$, $i\in[p]$.  This case is special in that we are able to make a simple argument based on block structure in the coefficient matrix $A(\Sigma)$. 

By \Cref{prop:subgraph}, in order to prove global identifiability for all DAGs it suffices to treat DAGs that are complete in the sense of the following definition.

\begin{definition}
A directed simple graph $G=(V,E)$ with $V=[p]$ is \emph{complete} if there is an edge between every pair of distinct nodes. 
\end{definition}

A simple graph that also contains all self-loops $i\to i$, $i\in[p]$, is complete if and only if $|E|=p(p+1)/2$. Because vertex relabelling has no impact on identifiability, we can furthermore restrict attention to a single topological ordering.  In other words, it suffices to consider the single complete DAG $G^*$  whose edge set comprises all edges $i\to j$ with $i\ge j$.

\begin{figure}[t]
\begin{center}
\begin{tikzpicture}[->,every node/.style={circle,draw},line width=1pt, node distance=1.5cm]
  \node (1) at (1.5,1.0) {$1$};
  \node (2) at (0,0) {$2$};
  \node (3) at (3,0) {$3$};
\foreach \from/\to in {2/1,3/1,3/2}
\draw (\from) -- (\to);   
\path (1) edge [loop above] (1);
\path (2) edge [loop left] (2);
\path (3) edge [loop right] (3);
 \end{tikzpicture}  
\caption{The complete DAG $G^*$ on 3 nodes.}
\label{fig:DAG3nodes}
\end{center}
\end{figure}
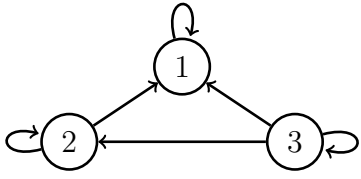

\begin{example}
\label{ex:complete DAG 3 nodes}
Consider the case of $p=3$ nodes, for which the complete DAG $G^*=(V,E^*)$ is shown in \Cref{fig:DAG3nodes}.  The graph encodes the drift matrix
\begin{align*}
M = \begin{pmatrix}
m_{11} & m_{12} & m_{13} \\
0 & m_{22} & m_{23} \\
0 & 0 & m_{33}
\end{pmatrix},
\end{align*}
and the submatrix $A(\Sigma)_{\cdot,E^*}$ is equal to
\begin{align*}
\begin{blockarray}{ccccccc}
& 1\to 1 & 2 \to 1 & 3 \to 1 & 2 \to 2  & 3 \to 2 & 3 \to 3 \\
\begin{block}{c(cccccc)}
            (1,1) & 2 \Sigma_{11}  &2 \Sigma_{12} & 2\Sigma_{13} & 0 & 0 & 0\\
            (1,2) & \Sigma_{12}  & \Sigma_{22}  & \Sigma_{23}  & \Sigma_{12}  & \Sigma_{13} &0\\
            (1,3) & \Sigma_{13} & \Sigma_{23} & \Sigma_{33}  & 0  & 0 & \Sigma_{13}\\
            (2,2) & 0 & 0 & 0  & 2 \Sigma_{22} & 2\Sigma_{23} & 0 \\
            (2,3) & 0 & 0 & 0 & \Sigma_{23}    & \Sigma_{33} & \Sigma_{23}\\
            (3,3) & 0 & 0 & 0 & 0 & 0 & 2 \Sigma_{33}\\
\end{block}
\end{blockarray}.
\end{align*}

Up to some rows being scaled by 2, the three diagonal blocks are principal minors of the positive definite matrix $\Sigma$. Therefore, it holds for all $\Sigma\in\PD_3$ that
\begin{align*}
    \det A(\Sigma)_{\cdot,E^*}&= \begin{vmatrix}
                 2 \Sigma_{11} &2 \Sigma_{12} & 2 \Sigma_{13}\\
              \Sigma_{12}   & \Sigma_{22}  & \Sigma_{23}\\
              \Sigma_{13} & \Sigma_{23} & \Sigma_{33}
    \end{vmatrix} \cdot 
    \begin{vmatrix}
        2\Sigma_{22} & 2 \Sigma_{23}\\
        \Sigma_{23} & \Sigma_{33}
    \end{vmatrix} \cdot
    |2\Sigma_{33}|\\
    &=2^3 \cdot \det(\Sigma)\cdot \det(\Sigma_{\{2,3\},\{2,3\}}) \cdot \Sigma_{33} > 0.
\end{align*}
\end{example}

The block structure found in \Cref{ex:complete DAG 3 nodes} generalizes and gives the main result of this section.

\begin{theorem}
\label{theo:dagidenti}
Let $G=(V,E)$ be a DAG with $V=[p]$.  Then the model $\mathcal{M}_{G,C}$ is globally identifiable for every matrix $C\in\PD_p$.
\end{theorem}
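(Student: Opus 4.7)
The plan is to reduce the statement to a single complete DAG, where a careful choice of row and column orderings turns $A(\Sigma)_{\cdot, E^*}$ into a block upper-triangular matrix whose diagonal blocks are (scaled) principal submatrices of $\Sigma$.

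\textbf{Reduction to a complete DAG.} Any DAG admits a topological ordering, so after relabeling the vertices I may assume every non-self-loop edge $i\to j$ of $G$ satisfies $i>j$. Relabeling conjugates $M$, $\Sigma$ and $C$ simultaneously by a permutation matrix, which preserves positive definiteness of $C$ and transforms the Lyapunov equation covariantly; hence global identifiability is invariant under it. Under this labeling $G$ is a subgraph of the complete DAG $G^*=([p],E^*)$ with $E^*=\{i\to j : i\ge j\}$, so by \Cref{prop:subgraph} it suffices to prove global identifiability of $\mathcal{M}_{G^*,C}$.

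\textbf{Block upper-triangular structure.} Group the rows of $A(\Sigma)$, indexed by pairs $(k,l)$ with $k\le l$, into blocks $R_{j'}=\{(j',l):l\ge j'\}$ according to the value $j'=k$, and group the columns of $A(\Sigma)_{\cdot,E^*}$, indexed by $i\to j$ with $i\ge j$, into blocks $C_j=\{i\to j : i\ge j\}$ according to the head $j$. From \eqref{eq:doubleindex}, an entry in row $(j',l)\in R_{j'}$ and column $i\to j\in C_j$ is nonzero only if $j\in\{j',l\}$; since $l\ge j'$, this forces $j\ge j'$. Hence all entries with $j<j'$ vanish and, once rows and columns are ordered by ascending $j'$ and $j$ respectively (with any consistent secondary ordering within each block), $A(\Sigma)_{\cdot,E^*}$ is block upper triangular with $p$ diagonal blocks.

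\textbf{Identifying the diagonal blocks.} Consulting \eqref{eq:doubleindex} once more, the $j$-th diagonal block has rows parametrized by $l\in\{j,\ldots,p\}$ and columns parametrized by $i\in\{j,\ldots,p\}$, with entry $\Sigma_{li}$ if $l>j$ and entry $2\Sigma_{ji}=2\Sigma_{li}$ if $l=j$. So this block equals the principal submatrix $\Sigma_{\{j,\ldots,p\},\{j,\ldots,p\}}$ with its first row scaled by $2$, and its determinant is
$$2\det\bigl(\Sigma_{\{j,\ldots,p\},\{j,\ldots,p\}}\bigr)>0,$$
because every principal submatrix of a positive definite matrix is positive definite. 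Multiplying over $j$ gives
$$\bigl\lvert\det A(\Sigma)_{\cdot,E^*}\bigr\rvert = 2^p\prod_{j=1}^{p}\det\bigl(\Sigma_{\{j,\ldots,p\},\{j,\ldots,p\}}\bigr)>0$$
for \emph{every} $\Sigma\in\PD_p$, and in particular for every $\Sigma\in\mathcal{M}_{G^*,C}$. By \Cref{lem:rankidentifiability}(i) the model $\mathcal{M}_{G^*,C}$ is globally identifiable, and the reduction step then transfers this conclusion to every DAG $G$.

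\textbf{Main obstacle.} There is no deep obstacle; the work is essentially bookkeeping in the index pattern \eqref{eq:doubleindex}. The only genuine insight is choosing the head vertex $j$ as the simultaneous block index for both rows and columns—a choice already foreshadowed by the column swap performed in \Cref{ex:complete DAG 3 nodes}. Positivity of the block determinants is handed to us for free by $\Sigma\in\PD_p$, so no spectral or combinatorial argument beyond that is needed.
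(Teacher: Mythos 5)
Your proof is correct and follows essentially the same route as the paper: reduce to the complete DAG $G^*$ via \Cref{prop:subgraph} and a topological relabeling, exhibit the block upper-triangular structure of $A(\Sigma)_{\cdot,E^*}$ by grouping rows and columns by the head vertex, and observe that the diagonal blocks are principal submatrices of $\Sigma$ with one row scaled by $2$, giving $|\det A(\Sigma)_{\cdot,E^*}|=2^p\prod_{j=1}^p\det\bigl(\Sigma_{\{j,\dots,p\},\{j,\dots,p\}}\bigr)>0$. This matches the paper's argument step for step, so no further comparison is needed.
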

\begin{proof}
As noted above, it suffices to consider the complete DAG $G^*=(V,E^*)$ whose edges are $i\to j$ for $i\ge j$.  

Our proof then applies \Cref{lem:rankidentifiability}, which states that  model $\mathcal{M}_{G^*,C}$ is globally identifiable if and only if $\det(A(\Sigma)_{\cdot,E^*})\not=0$ for all $\Sigma \in \mathcal{M}_{G^*,C}$. 

In what follows, let $\Sigma \in \mathrm{PD}_p$. Partition the edge set as $E^*=E_1^*\cup E_2^*\cup\dots\cup E^*_p$, where $E^*_i = \{j\to i: j\ge i\}$.  Similarly, partition the row index set of $A(\Sigma)$ into the disjoint union of the sets $R_k=\{(k,l):l\ge k\}$, $k=1,\dots,p$.  Inspecting~\eqref{eq:doubleindex}, we see that the submatrix
\[
A(\Sigma)_{R_k,E^*_i} =0 \quad\text{if} \ k >i.
\]

Hence, the matrix $A(\Sigma)$ can be arranged in block upper-triangular form, and
\begin{align*}
    \det\left( A(\Sigma)_{\cdot,E^*}\right) = 
    \prod_{i=1}^p \det \left( A(\Sigma)_{R_i,E^*_i}\right).
    \end{align*}
Note that arrangement of columns and rows is consistent with the ordering in \eqref{eq:col_order}.
Inspecting again~\eqref{eq:doubleindex}, we find that $A(\Sigma)_{R_i,E^*_i}$ is equal to the principal submatrix $P(\Sigma)_{\ge i}:=\Sigma_{\{i,\dots,p\},\{i,\dots,p\}}$ but with the first row of $P(\Sigma)_{\ge i}$ (the one indexed by $i$) being multiplied by 2 in $A(\Sigma)_{R_i,E^*_i}$.
Since all principal minors of a positive definite matrix $\Sigma$ are positive, we obtain that
\begin{align*}
      |\det\big(A(\Sigma)_{\cdot,E^*}\big)|=2^p\prod_{i=1}^{p}\det\big( P(\Sigma)_{\ge i}\big)> 0\quad \text{ for all } \  \Sigma \in\mathrm{PD}_p.
\end{align*}
In particular, $A(\Sigma)_{\cdot,E^*}$ has non-vanishing determinant for all $\Sigma\in\mathcal{M}_{G^*,C}$.
\end{proof}

The proof of \Cref{theo:dagidenti} shows that for any complete DAG $G=(V,E)$ the matrix $A(\Sigma)_{\cdot, E}$ is invertible for all $\Sigma \in \mathrm{PD}_p$. Using this fact, the proof of the theorem reveals more information about Lyapunov models arising from DAGs.

\begin{corollary}
\label{cor:completeModel}
Let $G=(V,E)$ be a DAG with $V=[p]$. Then $\mathcal{M}_{G,C}$ is an algebraic and thus closed subset of $\mathrm{PD}_p$.  If $G$ is complete then $\mathcal{M}_{G,C}=\mathrm{PD}_p$.
\end{corollary}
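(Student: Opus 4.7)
The plan is to derive both parts of \Cref{cor:completeModel} as direct consequences of the key invertibility fact established in the proof of \Cref{theo:dagidenti} and highlighted in the paragraph preceding the corollary, namely that $A(\Sigma)_{\cdot,E^*}$ associated with the complete DAG $G^*$ is invertible at every $\Sigma\in\PD_p$. Throughout I would assume the topological relabelling used in the proof of \Cref{theo:dagidenti}, so that any DAG $G=(V,E)$ is a subgraph of the complete DAG $G^*=(V,E^*)$, i.e.\ $E\subseteq E^*$.

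First I would dispatch the statement $\mathcal{M}_{G^*,C}=\PD_p$. Fix $\Sigma\in\PD_p$ and $C\in\PD_p$ arbitrary. By invertibility of $A(\Sigma)_{\cdot,E^*}$, the linear system
\[
A(\Sigma)_{\cdot,E^*}\,\mathrm{vec}(M)_{E^*}=-\mathrm{vech}(C)
\]
has a unique solution $M\in\mathbb{R}^{E^*}$, and this $M$ satisfies the Lyapunov equation for the positive definite pair $(\Sigma,C)$. By the ``only if'' direction of the characterization recalled in \Cref{rem:dropStability}, $M$ is stable, so $M\in\mathrm{Stab}(E^*)$ and $\Sigma=\phi_{G^*,C}(M)\in\mathcal{M}_{G^*,C}$. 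Since $\Sigma\in\PD_p$ was arbitrary, equality follows.

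For a general DAG $G=(V,E)\subseteq G^*$, given $\Sigma\in\PD_p$ let $M^*(\Sigma)\in\mathbb{R}^{E^*}$ denote the unique solution constructed in the previous paragraph. By Cramer's rule every entry $m^*_{ji}(\Sigma)$ is a rational function in the entries of $\Sigma$ whose common denominator $\det A(\Sigma)_{\cdot,E^*}$ is nonvanishing on $\PD_p$. I would then establish
\[
\mathcal{M}_{G,C}=\PD_p\cap\bigcap_{i\to j\in E^*\setminus E}\bigl\{\Sigma:m^*_{ji}(\Sigma)=0\bigr\}.
\]
The inclusion ``$\supseteq$'' holds because when the listed components vanish, $M^*(\Sigma)$ already lies in $\mathbb{R}^E$ and is stable by the argument above. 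The inclusion ``$\subseteq$'' is a uniqueness step: if $\Sigma=\phi_{G,C}(M)$ for some $M\in\mathrm{Stab}(E)\subseteq\mathbb{R}^{E^*}$, then $M$ solves the same invertible linear system as $M^*(\Sigma)$, hence coincides with it, forcing $m^*_{ji}(\Sigma)=0$ for every $i\to j\in E^*\setminus E$. Clearing the (nonvanishing) common denominator converts these conditions into polynomial equations in the entries of $\Sigma$, exhibiting $\mathcal{M}_{G,C}$ as the intersection of $\PD_p$ with a real algebraic variety; since Zariski-closed sets are Euclidean-closed, $\mathcal{M}_{G,C}$ is closed in $\PD_p$.

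The only subtle point I anticipate is the uniqueness step in the last paragraph, which requires viewing a drift matrix $M\in\mathbb{R}^E$ as an element of $\mathbb{R}^{E^*}$ with prescribed zeros and observing that both viewpoints feed into the same linear system governed by the invertible coefficient matrix $A(\Sigma)_{\cdot,E^*}$. This is a direct consequence of the proof of \Cref{theo:dagidenti} and needs no additional machinery, so I do not expect any genuinely hard computation in the argument.
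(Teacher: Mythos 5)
Your proposal is correct and follows essentially the same route as the paper: both parts rest on the invertibility of $A(\Sigma)_{\cdot,E^*}$ for a completion of the DAG, which is exactly what the proof of \Cref{theo:dagidenti} establishes for all $\Sigma\in\PD_p$. The only (immaterial) difference is how the polynomial equations cutting out $\mathcal{M}_{G,C}$ are extracted --- you use the Cramer's-rule numerators of the missing-edge entries of the unique complete-graph solution, while the paper uses the vanishing of the $(|E|+1)$-minors of the augmented matrix $\left(A(\Sigma)_{\cdot,E}\ \vert\ \mathrm{vech}(C)\right)$; both describe the same algebraic subset of $\PD_p$.
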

\begin{proof}
Let $G$ be a complete DAG.
By \Cref{theo:dagidenti}, the square matrix $A(\Sigma)_{\cdot,E}$ has full rank for all $\Sigma\in\PD_p$. Therefore, the solution $\mathrm{vec}(M)$ to the vectorized Lyapunov equation \eqref{eq:Asigmaidenti} exists uniquely for all $\Sigma\in\PD_p$. The resulting drift matrix $M$ has the right support by construction, hence $\mathcal{M}_{G,C}=\PD_p$. 

If $G$ is a non-complete DAG, then we may add edges to obtain a complete DAG $\bar G=(V,\bar E)$.  As  $A(\Sigma)_{\cdot,\bar E}$ has full column rank for all $\Sigma\in\PD_p$ the same is true for $A(\Sigma)_{\cdot,E}$; recall  \Cref{prop:subgraph}. Hence, a matrix $\Sigma\in\PD_p$ is in $\mathcal{M}_{G,C}$ if and only if $\textrm{vech}(C)$ is in the column span of $A(\Sigma)_{\cdot,E}$ if and only if the $(\vert E\vert+1)$-minors of the augmented matrix $\left(A(\Sigma)_{\cdot,E}\ \vert \ \textrm{vech}(C)\right)$ vanish. 
The model $\mathcal{M}_{G,C}$ is thus an algebraic subset:  it is the set of positive definite matrices at which these minors vanish.   
\end{proof}

\section{Sums of squares decompositions and finer rank conditions}\label{sec:cyclic}

Directed cycles break the block-diagonal structure found for DAGs (\Cref{theo:dagidenti}) making it difficult to check rank conditions on $A(\Sigma)$.  In this section we show that small cyclic graphs can nevertheless be handled by applying sums of squares decompositions to certify positivity of subdeterminants.   Moreover, we show that our rank conditions may be placed on a smaller matrix containing a basis for the kernel of $A(\Sigma)$.  

In \Cref{ex:Identifiability3cycle}, we proved global identifiability for the 3-cycle by showing that the key factor $\Sigma_{11}\Sigma_{22}\Sigma_{33}-\Sigma_{12}\Sigma_{13}\Sigma_{23}$ in the determinant of $A(\Sigma)_{\cdot,E}$ is positive on $\PD_3$.  We were able to argue this via the positivity of $2\times 2$ principal minors of $\Sigma$.  However, a direct extension of this approach to cyclic graphs with a larger number of nodes is difficult.  Nevertheless, some headway can be made by exploiting the positive-definiteness of $\Sigma$ via its Cholesky decomposition.   

\begin{figure}[t]
\begin{center}
\begin{tikzpicture}[->,every node/.style={circle,draw},line width=1pt]
  \node[ellipse,draw] (1) at (0,0) {1};
  \node (2) at (0,1.7) {$2$};
  \node (3) at (2,1.7) {$3$};
  \node (4) at (2,0) {$4$}; 
\foreach \from/\to in {1/2,2/3,3/4,4/1,1/3,2/4}
\draw (\from) -- (\to);   
\path (1) edge [loop left] (1);
\path (2) edge [loop left] (2);
\path (3) edge [loop right] (3);
\path (4) edge [loop right] (4);
 \end{tikzpicture}  
\caption{A completion of the 4-cycle.}
\label{fig:4cycle}
\end{center}
\end{figure}
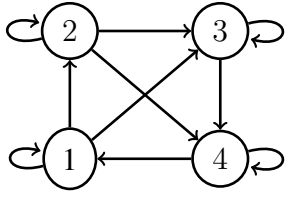

\begin{example}\label{ex:completed4cycle}
Let $G=(V,E)$ be the completion of the 4-cycle with $V=[4]$ and $E= \lbrace 1 \to 1, 2 \to 2, 3 \to 3, 4 \to 4, 1\to 2 ,\,1\to 3,\,2\to 3,2\to 4,\,3\to 4,\,4\to 1 \rbrace$.  It is displayed in \Cref{fig:4cycle}. Let $\Sigma=LL^{\top}$ be the Cholesky decomposition of $\Sigma\in\PD_4$ in terms of the lower-triangular matrix
$$L = \begin{pmatrix}
l_{11} & 0 & 0 & 0 \\
l_{12} & l_{22} & 0 & 0 \\
l_{13} & l_{23} & l_{33} & 0\\
l_{14} & l_{24} & l_{34} & l_{44}
\end{pmatrix}$$ 
with $l_{11},l_{22},l_{33},l_{44}>0$. Then 
\[
\vert\det(A(LL^{\top})_{\cdot,E})\vert=16\,l_{44}^2l_{33}^2l_{22}^4l_{11}^6\cdot |f(L)|,
\]
where the key factor is
\begin{align*}
f(L) \,=\;\:& 
l_{14}^2l_{22}^2l_{33}^2
-l_{12}l_{14}l_{22}l_{24}l_{33}^2
+l_{12}^2l_{24}^2l_{33}^2
+l_{22}^2l_{24}^2l_{33}^2
-l_{13}l_{14}l_{22}^2l_{33}l_{34}\\
&+l_{12}l_{14}l_{22}l_{23}l_{33}l_{34}
+l_{12}l_{13}l_{22}l_{24}l_{33}l_{34}
-l_{12}^2l_{23}l_{24}l_{33}l_{34}
+l_{13}^2l_{22}^2l_{34}^2\\
&-2l_{12}l_{13}l_{22}l_{23}l_{34}^2
+l_{12}^2l_{23}^2l_{34}^2
+l_{12}^2l_{33}^2l_{34}^2
+l_{22}^2l_{33}^2l_{34}^2
+l_{13}^2l_{22}^2l_{44}^2\\
&-2l_{12}l_{13}l_{22}l_{23}l_{44}^2
+l_{12}^2l_{23}^2l_{44}^2
+l_{12}^2l_{33}^2l_{44}^2+l_{22}^2l_{33}^2l_{44}^2.
\end{align*}
A computer algebra system such as \texttt{Macaulay2} with the package from \cite{cifuentes2020} quickly finds a sum of squares (SOS) decomposition for $f$ as
\begin{multline*}
f(L)=\left(\frac{1}{2}l_{14}l_{22}l_{33}-\frac{1}{2}l_{12}l_{24}l_{33}-l_{13}l_{22}l_{34}+l_{12}l_{23}l_{34}\right)^2\\
+\left(-l_{13}l_{22}l_{44}+l_{12}l_{23}l_{44}\right)^2
+\left(l_{12}l_{33}l_{34}\right)^2+\left(l_{12}l_{33}l_{44}\right)^2+\left(l_{22}l_{24}l_{33}\right)^2\\
+\left(l_{22}l_{33}l_{34}\right)^2+\left(l_{22}l_{33}l_{44}\right)^2
+\frac{3}{4}\left(l_{14}l_{22}l_{33}-\frac{1}{3}l_{12}l_{24}l_{33}\right)^2
+\frac{2}{3}\left(l_{12}l_{24}l_{33}\right)^2.
\end{multline*}
Since $l_{22}l_{33}l_{44}>0$, it follows that $f$ is strictly positive for any Cholesky factor $L$.  Therefore, $\vert\det(A(\Sigma)_{\cdot,E})\vert>0$ and we conclude that $\mathcal{M}_{G,C}$ is globally identifiable, no matter the choice of $C\in\PD_4$.
\end{example}

\begin{remark}
A polynomial being a sum of squares is a stronger requirement than the polynomial being non-zero.  Therefore, we could have a non-vanishing determinant even if the considered polynomial factor failed the SOS test.  However, we do not know of an example where this might be the case.
\end{remark}

Observe that $\det(\Sigma)=(\det L)^2=l_{11}^2l_{22}^2l_{33}^2l_{44}^2$ appears as a factor of $\det(A(\Sigma)_{\cdot,E})$ in all our examples so far (recall \Cref{ex:Identifiability3cycle}, \Cref{ex:complete DAG 3 nodes}, and \Cref{ex:completed4cycle}). This phenomenon actually occurs for any complete simple graph (see \Cref{cor:detSfactors} in the 
Appendix) and suggests that identifiability should be encoded in a smaller matrix. Indeed, this information is carried by a specific row restriction of a matrix whose columns form a basis of the kernel of $A(\Sigma)$. 

The kernel of $A(\Sigma)$ is described by the following fact, straightforward to verify; see also
 \cite{barnett1967}.  It parametrizes the stable matrices $M$ that are solutions to the Lyapunov equation in terms of skew-symmetric matrices (matrices $K$ with $K^{\top}=-K$). 
\begin{lemma}
\label{lem:truesignalskew}
Consider the continuous Lyapunov equation from \eqref{eq:lyapunoveq}
for given $\Sigma,C \in \mathrm{PD}_p$.  Then a matrix $M \in \mathbb{R}^{p\times p}$ solves the Lyapunov equation if and only if there exists a skew-symmetric matrix $K$ such that
\begin{align*}
    M=\left(K-\frac{1}{2}C\right)\Sigma^{-1}.
\end{align*}
\end{lemma}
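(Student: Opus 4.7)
The plan is to prove both implications by direct algebraic manipulation, exploiting three structural facts: $\Sigma$ is invertible because it is positive definite, $\Sigma$ and $C$ are symmetric, and a matrix $K$ is skew-symmetric precisely when $K+K^{\top}=0$.

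For the forward direction, suppose $M$ satisfies $M\Sigma+\Sigma M^{\top}+C=0$. My candidate witness is $K := M\Sigma+\tfrac{1}{2}C$; this choice is forced by solving $M = (K-\tfrac{1}{2}C)\Sigma^{-1}$ for $K$. I would then check skew-symmetry by computing
\[
K+K^{\top} \,=\, M\Sigma+\tfrac{1}{2}C+\Sigma M^{\top}+\tfrac{1}{2}C^{\top} \,=\, M\Sigma+\Sigma M^{\top}+C \,=\, 0,
\]
where I used $C^{\top}=C$ in the second equality and the Lyapunov equation in the third. Since $\Sigma$ is invertible, rearranging the definition of $K$ gives $M=(K-\tfrac{1}{2}C)\Sigma^{-1}$, as required.

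For the reverse direction, assume $M=(K-\tfrac{1}{2}C)\Sigma^{-1}$ for some skew-symmetric $K$. Then $M\Sigma = K-\tfrac{1}{2}C$, and using $\Sigma^{\top}=\Sigma$, $C^{\top}=C$, and $K^{\top}=-K$ I compute
\[
\Sigma M^{\top} \,=\, \Sigma\,\Sigma^{-\top}(K-\tfrac{1}{2}C)^{\top} \,=\, K^{\top}-\tfrac{1}{2}C^{\top} \,=\, -K-\tfrac{1}{2}C.
\]
Summing these two expressions and adding $C$ gives $M\Sigma+\Sigma M^{\top}+C=0$, establishing that $M$ solves the Lyapunov equation.

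There is no real obstacle here; the only nontrivial step is identifying the correct skew-symmetric witness $K=M\Sigma+\tfrac{1}{2}C$, but this choice is dictated by the target formula. The argument also makes transparent why the parametrization by skew-symmetric matrices is natural: the symmetric part of $M\Sigma$ is fixed to $-\tfrac{1}{2}C$ by the Lyapunov equation, while its skew-symmetric part is completely free, and this free part is exactly $K$.
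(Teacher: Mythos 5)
Your proof is correct and is exactly the ``straightforward to verify'' argument that the paper omits (it only cites Barnett 1967): the Lyapunov equation says the symmetric part of $M\Sigma$ equals $-\tfrac{1}{2}C$, so $K = M\Sigma + \tfrac{1}{2}C$ is the skew-symmetric part, and both directions follow by direct computation as you show. Nothing is missing.
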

The proof can be found in Section 2 of \cite{barnett1967} and is included here for completeness as Lemma \ref{lem:truesignalskew} plays an important role for the following results.
\begin{proof}
Substituting $M$ into equation \eqref{eq:lyapunoveq} and using the symmetry of $\Sigma$, $C$ and that $K^{\top}=-K$, we obtain 
\begin{align*}
 M\Sigma +\Sigma M^{\top} =& \left(K- \tfrac12 C\right)\Sigma^{-1} \Sigma + \Sigma (\Sigma^{-1})^{\top} \left(K- \tfrac12 C\right)^{\top}\\
=&  \left(K- \tfrac12 C\right) + \left(-K- \tfrac12 C\right) =-C.
\end{align*}
Conversely, since $M$ and $C$ are both symmetric matrices we can write \eqref{eq:lyapunoveq} as 
\begin{align*}
(M \Sigma)^{\top} + \tfrac12 C^{\top}  = - M \Sigma - \tfrac12 C.
\end{align*}
Therefore, the matrix $K=M \Sigma + \tfrac12 C$ is skew-symmetric.
\end{proof}

The space of skew-symmetric matrices has dimension $p(p-1)/2$.  Hence, for $\Sigma \in\mathrm{PD}_p$, the kernel of $A(\Sigma)$ also has dimension $p(p-1)/2$.  We give further details about the spectral properties of $A(\Sigma)$ in \Cref{theo:Asigmaeigenvalues}.
The following result now gives simplified rank conditions for identifiability.

\begin{lemma}
\label{theo:kernelcrit}
Let $G=(V,E)$ be a directed graph with $V=[p]$, and let $C\in\PD_p$. For every $\Sigma\in\PD_p$, let $H(\Sigma)$ be a $p^2\times p(p-1)/2$ matrix whose columns form a basis of the kernel of $A(\Sigma)$, and let $H(\Sigma)_{E^{c},\cdot}$ be the submatrix obtained by restriction to rows corresponding to non-edges $E^c$ of $G$.
Then the associated model $\mathcal{M}_{G,C}$ is 
\begin{itemize}
    \item[(i)] globally identifiable if and only if $H(\Sigma)_{E^{c},\cdot}$ has full column rank $p(p-1)/2$ for all $\Sigma \in \mathcal{M}_{G,C}$;
    \item[(ii)] generically identifiable if and only if there exists a matrix $\Sigma \in \mathcal{M}_{G,C}$ such that $H(\Sigma)_{E^{c},\cdot}$ has full column rank $p(p-1)/2$.
\end{itemize}
\end{lemma}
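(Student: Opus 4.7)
The plan is to invoke Lemma~\ref{lem:rankidentifiability} and reduce the statement to a purely linear-algebraic claim about $A(\Sigma)$ and any basis matrix $H(\Sigma)$ of its kernel, namely: for any fixed $\Sigma\in\PD_p$, the submatrix $A(\Sigma)_{\cdot,E}$ has full column rank $|E|$ if and only if $H(\Sigma)_{E^c,\cdot}$ has full column rank $p(p-1)/2$. Once this equivalence is established, parts (i) and (ii) follow immediately by applying it to all $\Sigma\in\mathcal{M}_{G,C}$ and to some $\Sigma\in\mathcal{M}_{G,C}$, respectively, combined with Lemma~\ref{lem:rankidentifiability}(i)--(ii).

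To prove the equivalence, I would first record that, by Lemma~\ref{lem:truesignalskew}, $\ker A(\Sigma)$ has dimension $p(p-1)/2$ for every $\Sigma\in\PD_p$, so the matrix $H(\Sigma)$ in the statement really has $p(p-1)/2$ columns that form a basis for $\ker A(\Sigma)$. In particular, $H(\Sigma)$ itself has full column rank. The key observation is the following canonical bijection: a vector $u\in\mathbb{R}^{|E|}$ lies in $\ker A(\Sigma)_{\cdot,E}$ if and only if its zero-extension $\tilde u\in\mathbb{R}^{p^2}$ (obtained by placing $u$ at the coordinates indexed by $E$ and zeros at those indexed by $E^c$) lies in $\ker A(\Sigma)$.

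For the forward direction, if $A(\Sigma)_{\cdot,E}$ fails to have full column rank, choose a nonzero $u$ with $A(\Sigma)_{\cdot,E}u=0$ and let $\tilde u$ be its zero-extension. Then $\tilde u\in\ker A(\Sigma)$, so $\tilde u=H(\Sigma)w$ for a unique $w\in\mathbb{R}^{p(p-1)/2}$, and $w\ne 0$ because $\tilde u\ne 0$ and $H(\Sigma)$ has full column rank. By construction $\tilde u_{E^c}=0$, hence $H(\Sigma)_{E^c,\cdot}w=0$, showing that $H(\Sigma)_{E^c,\cdot}$ does not have full column rank. For the reverse direction, if $H(\Sigma)_{E^c,\cdot}w=0$ for some nonzero $w$, set $\tilde u:=H(\Sigma)w$, which is nonzero (again by full column rank of $H(\Sigma)$), supported on $E$, and lies in $\ker A(\Sigma)$. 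Its restriction $u:=\tilde u_E$ is therefore a nonzero vector with $A(\Sigma)_{\cdot,E}u=0$, and hence $A(\Sigma)_{\cdot,E}$ does not have full column rank either.

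The argument has no real obstacle beyond keeping track of bookkeeping: the bijection between vectors in $\ker A(\Sigma)_{\cdot,E}$ and vectors in $\ker A(\Sigma)$ supported on $E$ does the entire job, and the dimension count $\dim\ker A(\Sigma)=p(p-1)/2$ provided by Lemma~\ref{lem:truesignalskew} guarantees that $H(\Sigma)$ is well-defined with the stated shape. Combining the equivalence above with Lemma~\ref{lem:rankidentifiability} then yields claims (i) and (ii).
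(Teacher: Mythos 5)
Your proposal is correct and follows essentially the same route as the paper: both reduce the statement via Lemma~\ref{lem:rankidentifiability} to the equivalence between full column rank of $A(\Sigma)_{\cdot,E}$ and of $H(\Sigma)_{E^c,\cdot}$, using the correspondence between kernel vectors of $A(\Sigma)$ supported on $E$ and vectors $w$ with $H(\Sigma)_{E^c,\cdot}w=0$. The only cosmetic difference is that you argue both directions directly by contraposition with kernel vectors, whereas the paper phrases one direction as extending a non-vanishing maximal minor of $H(\Sigma)_{E^c,\cdot}$ to one of $\bigl(H(\Sigma)\mid\mathrm{vec}(M)\bigr)$; the substance is identical.
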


\begin{proof}
Recall from \Cref{lem:rankidentifiability} that the elements of the fiber are solutions of the equation system \eqref{eq:Asigmaidenti}, which has a unique solution for a given (positive definite) matrix $\Sigma\in\mathcal{M}_{G,C}$ if and only if $A(\Sigma)_{\cdot,E}$ has linearly independent columns. 
The latter condition can be rephrased as follows:
the kernel of $A(\Sigma)$ does not contain any element $\mathrm{vec}(M)\neq 0$ such that $M\in\mathbb{R}^E$.  Put differently, \eqref{eq:Asigmaidenti} admits a unique solution if and only if the column span of $H(\Sigma)$ does not contain any element $\mathrm{vec}(M)\neq 0$ for $M\in\mathbb{R}^E$.  
As $H(\Sigma)$ has linearly independent columns, this latter condition is equivalent to 
the linear independence of the columns of the extended matrix $(H(\Sigma)\mid\mathrm{vec}(M))$ for any non-trivial $M\in\mathbb{R}^E$.  It remains to be proven that this, in turn, is equivalent to the $\vert E^c\vert\times p(p-1)/2$ submatrix $H(\Sigma)_{E^c,\cdot}$ having rank $p(p-1)/2$.

Assume that $H(\Sigma)_{E^c,\cdot}$ has rank $p(p-1)/2$, and consider one of its non-vanishing maximal minors.  This minor can always be extended to a non-vanishing maximal minor of $(H(\Sigma) \mid \mathrm{vec}(M))$ by adding one of the rows corresponding to $m_{ji}\neq 0$. Therefore, the extended matrix has full rank.

For the converse implication, note that if $H(\Sigma)_{E^c,\cdot}$ has rank strictly less than $p(p-1)/2$, then there exists a (not unique) non-trivial $M\in\mathbb{R}^E$ such that $\mathrm{vec}(M)$ belongs to the kernel of $A(\Sigma)$.
\end{proof}

For a convenient choice of a basis of the kernel of $A(\Sigma)$ we may appeal to the following fact.

\begin{lemma}
\label{lem:HSigma}
For a matrix $\Sigma\in\PD_p$, the kernel of $A(\Sigma)$ equals
\begin{align*}
\ker A(\Sigma) &\;=\;\{\mathrm{vec}(K\Sigma^{-1}): K\ \text{skew-symmetric}\}\\
&\;=\; \{\mathrm{vec}(\Sigma K): K\ \text{skew-symmetric}\}.
\end{align*}
\end{lemma}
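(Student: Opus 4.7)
The plan is to reduce the lemma to Lemma~\ref{lem:truesignalskew} (applied with $C=0$), together with a change-of-variable observation for the two equivalent descriptions.

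First, I would make explicit the connection between $A(\Sigma)$ and the symmetric matrix $M\Sigma+\Sigma M^\top$. By construction in \Cref{def:ASigma} and the computation in the proof of \Cref{lem:ASigma}, for any $M\in\mathbb{R}^{p\times p}$ one has
$$
A(\Sigma)\,\mathrm{vec}(M) \;=\; \mathrm{vech}(M\Sigma+\Sigma M^\top).
$$
Since $M\Sigma+\Sigma M^\top$ is symmetric, its half-vectorization vanishes if and only if the whole matrix vanishes. Hence $\mathrm{vec}(M)\in\ker A(\Sigma)$ if and only if $M\Sigma+\Sigma M^\top=0$.

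Second, I would characterize the solutions $M$ of the homogeneous equation $M\Sigma+\Sigma M^\top=0$. The argument used in (and available from) \Cref{lem:truesignalskew} proceeds by splitting $M\Sigma$ into its symmetric and skew-symmetric parts and does not require $C$ to be positive definite, only symmetric; applying it with $C=0$ gives that $M\Sigma+\Sigma M^\top=0$ if and only if $M=K\Sigma^{-1}$ for some skew-symmetric matrix $K$. This yields the first equality in the statement.

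For the second equality, I would observe that the map $K\mapsto \widetilde K:=\Sigma^{-1}K\Sigma^{-1}$ is a linear bijection of the space of skew-symmetric matrices (its inverse is $\widetilde K\mapsto \Sigma\widetilde K\Sigma$, and skew-symmetry is preserved because $\Sigma$ is symmetric). Under this change of variables,
$$
K\Sigma^{-1}\;=\;\Sigma\bigl(\Sigma^{-1}K\Sigma^{-1}\bigr)\;=\;\Sigma\widetilde K,
$$
so the two parametrizations describe the same subspace of $\mathbb{R}^{p\times p}$. No dimension count is needed for the proof itself, but as a sanity check the space of skew-symmetric matrices has dimension $p(p-1)/2$, matching the kernel dimension implied by \Cref{theo:Asigmaeigenvalues}. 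The main obstacle is essentially bookkeeping: verifying that the ``$C$ positive definite'' hypothesis of \Cref{lem:truesignalskew} is not actually used in the part of the argument we need, which can be settled in one line by reproducing the symmetric/skew decomposition of $M\Sigma$.
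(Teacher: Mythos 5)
Your proposal is correct and follows essentially the same route as the paper: the paper also derives the first equality from \Cref{lem:truesignalskew} (there by taking differences of solutions of the inhomogeneous equation rather than formally setting $C=0$, which amounts to the same one-line skew/symmetric computation you describe) and obtains the second equality from the observation that $K$ is skew-symmetric if and only if $\Sigma K\Sigma$ is. Your explicit verification that $A(\Sigma)\,\mathrm{vec}(M)=\mathrm{vech}(M\Sigma+\Sigma M^\top)=0$ iff $M\Sigma+\Sigma M^\top=0$ is a welcome bit of bookkeeping that the paper leaves implicit.
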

\begin{proof}
The first equality holds by \Cref{lem:truesignalskew}.  The second equality follows from the fact that $K$ is skew-symmetric if and only if $\Sigma K \Sigma$ is skew-symmetric.
\end{proof}

For $1\le k,l\le p$, let $K^{(k,l)}=e_k\otimes e_l - e_l\otimes e_k$ be the skew-symmetric matrix whose only non-zero entries are 1 in place $(k,l)$ and $-1$ in place $(l,k)$.  Then the set $\{ K^{(k,l)}:k<l\}$ is a basis of the space of $p\times p$ skew-symmetric matrices and, thus, the set $\{ \mathrm{vec}(\Sigma K^{(k,l)}):k<l\}$ is a basis of $\ker A(\Sigma)$.  We may thus choose the matrix $H(\Sigma)$ in \Cref{lem:HSigma} as the matrix with entries
\begin{align}
\label{eq:HSigma}
    H(\Sigma)_{i\to j,(k,l)} = \mathrm{vec}(\Sigma K^{(k,l)})_{ji} = \begin{cases}
    -\Sigma_{lj} &\text{ if } i=k,\\
    \Sigma_{kj} &\text{ if } i=l,\\
    0&\text{ otherwise}.
    \end{cases}
\end{align}
Note that we index the rows of $H(\Sigma)$ by all possible edges of a directed graph (including self-loops), in accordance with the indexing of the columns of $A(\Sigma)$.

\begin{example}\label{ex:kernel} Consider the $6\times 9$ matrix $A(\Sigma)$ in \Cref{ex:Identifiability3cycle} corresponding to $p=3$.  Then the matrix from \eqref{eq:HSigma} is
\begin{align*}
H(\Sigma)=\begin{blockarray}{cccc}
\begin{block}{(ccc)c}
         -\Sigma_{12} & -\Sigma_{13} & 0 & 1 \to 1\\
        -\Sigma_{22} & -\Sigma_{23} & 0 & 1 \to 2\\
        -\Sigma_{23}  & -\Sigma_{33} & 0 & 1 \to 3\\
        \Sigma_{11}  & 0 & -\Sigma_{13} & 2 \to 1\\
        \Sigma_{12}  & 0  & -\Sigma_{23} & 2 \to 2\\
        \Sigma_{13}  & 0 & -\Sigma_{33} & 2 \to 3\\
        0 &  \Sigma_{11} & \Sigma_{12}  & 3 \to 1\\
        0  & \Sigma_{12} & \Sigma_{22} & 3 \to 2\\
        0  & \Sigma_{13} & \Sigma_{23} & 3 \to 3\\
\end{block}
\end{blockarray}.
\end{align*}

Consider the DAG on 3 nodes given in \Cref{fig:DAG3nodes}, for which the set of non-edges is $E^{c}=\lbrace 1\to 2, 1 \to 3, 2 \to 3\rbrace$. Then
\begin{align*}
   \left\vert\det H(\Sigma)_{E^c,\cdot}\right\vert=\left\vert\det
   \begin{pmatrix}
         -\Sigma_{22} & -\Sigma_{23} & 0 \\
        -\Sigma_{23} &  -\Sigma_{33} & 0 \\
         \Sigma_{13} & 0 & -\Sigma_{33} 
  \end{pmatrix}\right\vert
    = \Sigma_{33}(\Sigma_{22}\Sigma_{33}-\Sigma_{23}^2)
\end{align*}
is a product of two principal minors of $\Sigma$, as expected from \Cref{theo:dagidenti}.

Next, let $E^{c}=\lbrace  2 \to 1,  1 \to 3, 3 \to 2 \rbrace$ be the set of non-edges of the 3-cycle from \Cref{fig:cycle}. Then
\begin{align*}
     \left\vert\det H(\Sigma)_{E^c,\cdot}\right\vert=\left\vert \det
    \begin{pmatrix}
        \Sigma_{11}  & 0 & -\Sigma_{13}\\ 
        -\Sigma_{23}  & -\Sigma_{33} & 0\\
         0 & \Sigma_{12} & \Sigma_{22} 
    \end{pmatrix}\right\vert= \Sigma_{11}\Sigma_{22}\Sigma_{33}-\Sigma_{12}\Sigma_{13}\Sigma_{23},
\end{align*}
which is what we obtained in \eqref{eq:detASigma3cycle}.
\end{example}

Following \Cref{ex:completed4cycle}, we can establish global identifiability by computing an SOS decomposition of the determinant of the restricted kernel $H(\Sigma)_{E^c,\cdot}$ using the Cholesky decomposition of $\Sigma$.  
Such computations allowed us to establish:

\begin{proposition}
\label{prop:SOScomputations}
   Let $G=(V,E)$ be a simple graph with $V=[p]$, and let $C\in\PD_p$.  Let $L\in\mathbb{R}^{p\times p}$ be lower-triangular.  If $p\le 4$, then there exists a permutation matrix $P$ such that $\det H(PLL^\top P^\top)_{E^c,\cdot}$ is an everywhere positive sum of squares in the entries of $L$, implying
   that $\mathcal{M}_{G,C}$ is globally identifiable. The same is true for $p=5$ with the possible exception of two types of graphs, depicted in \Cref{fig:probsimplegraphs5}. 
\end{proposition}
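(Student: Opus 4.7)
The statement is a finite computational verification, so the strategy is to reduce the problem to a tractable enumeration and then to appeal to a semidefinite-programming routine for the actual SOS certificates. First, by \Cref{prop:subgraph}, global identifiability of $\mathcal{M}_{G,C}$ is inherited by every subgraph obtained through edge deletion. Consequently, it suffices to treat \emph{maximal} simple graphs on $[p]$: those with $|E|=p(p+1)/2$, containing all self-loops together with exactly one oriented edge between each pair of distinct vertices. Such graphs correspond bijectively to tournaments on $p$ nodes, and up to isomorphism there are only $1,1,2,4,12$ of them for $p=1,2,3,4,5$ respectively. By \Cref{theo:kernelcrit} it suffices, for each such graph $G$, to prove that $\det H(\Sigma)_{E^c,\cdot}$ is nonvanishing on $\PD_p$.

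Next, I would exploit the Cholesky parametrization $\Sigma=LL^{\top}$ with $L$ lower-triangular with strictly positive diagonal, which is a bijection onto $\PD_p$. Substituting $\Sigma=PLL^{\top}P^{\top}$ for a permutation matrix $P$ yields a polynomial
\[
f_{G,P}(L) \;:=\; \det H(PLL^{\top}P^{\top})_{E^c,\cdot}
\]
in the $\binom{p+1}{2}$ entries of $L$. The purpose of the permutation $P$ is that reordering the vertices before taking the Cholesky factor produces genuinely different polynomials in $L$, some of which admit an SOS decomposition while others do not, even though all correspond to the same underlying identifiability question. Thus I would loop, for each tournament type, over representatives of the $p!$ permutations and attempt to compute an SOS decomposition of $f_{G,P}(L)$ using the SemidefiniteProgramming package in \texttt{Macaulay2}, proceeding exactly as in \Cref{ex:completed4cycle}. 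When an SOS certificate is found, strict positivity of $f_{G,P}(L)$ on the open set $\{L:l_{ii}>0\}$ follows (after factoring out the obvious monomial factors in the $l_{ii}$ that appear from expanding the determinant), which by \Cref{theo:kernelcrit} proves global identifiability of $\mathcal{M}_{G,C}$ for every $C\in\PD_p$.

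For $p\le 4$ the loop is very small and the SOS programs are well within the reach of current semidefinite-programming solvers, so the computation certifies the claim for every maximal simple graph; the case of the completed 4-cycle worked out explicitly in \Cref{ex:completed4cycle} is representative. For $p=5$ I would carry out the same loop over all $12$ tournament types and all $120$ vertex orderings, and verify that $10$ of the $12$ admit an SOS decomposition for some $P$. The two remaining tournaments, shown in \Cref{fig:probsimplegraphs5}, are the claimed exceptions.

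The main obstacle is purely computational. For $p=5$ the polynomial $f_{G,P}$ has degree $2\binom{5}{2}=20$ in $\binom{6}{2}=15$ variables, so the Gram matrix underlying the SOS program has rows and columns indexed by a very large monomial basis. This stresses both the time and memory budget of available solvers, and for the two exceptional tournaments none of the $120$ orderings reduced to an instance we could solve. It is important to note that a globally positive polynomial is not always a sum of squares, so the SOS approach is inherently incomplete; the two exceptional graphs may nonetheless define globally identifiable models, but the SOS certificate route does not decide them. Pruning the loop by exploiting the automorphism group of each tournament (so that only one representative from each $(G,P)$-orbit is tested) keeps the enumeration feasible.
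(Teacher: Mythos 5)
Your plan is correct and matches the paper's own argument, which is likewise a finite computational verification: reduce to complete simple graphs (tournaments plus self-loops) via \Cref{prop:subgraph} and \Cref{theo:kernelcrit}, parametrize $\Sigma$ by a permuted Cholesky factor, and certify positivity of $\det H(\Sigma)_{E^c,\cdot}$ by an SOS decomposition computed in \texttt{Macaulay2}, with the two tournaments of \Cref{fig:probsimplegraphs5} left unresolved because the SDPs do not terminate. The only refinement worth noting is that you make explicit a point the paper leaves implicit, namely that the determinant in the statement is only well-defined for maximal simple graphs, so the reduction to tournaments is in fact necessary rather than merely convenient.
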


For our computer proof of the claims in the proposition, we applied the computer algebra system \texttt{Macaulay2}.  For the graphs in \Cref{fig:probsimplegraphs5}, we additionally employed \texttt{Matlab} toolboxes, but our computations did not terminate.  It is natural to conjecture that \Cref{prop:SOScomputations} holds for all graphs with $p=5$, and even all simple graphs.

\begin{figure}[t]
\begin{center}
\begin{tikzpicture}[->,every node/.style={circle,draw},line width=1pt]
  \node (1) at (0,1.5) [ellipse,draw] {$1$};
  \node (2) at (1.5,2.625) {$2$};
  \node (3) at (3,1.5) {$3$};
  \node (4) at (2.25,0) {$4$}; 
  \node (5) at (0.75,0) {$5$};
\foreach \from/\to in {1/2,1/3,1/4,2/3,2/5,3/4,3/5,4/2,5/1,5/4}
\draw (\from) -- (\to);   
\path (1) edge [loop left] (1);
\path (2) edge [loop above] (2);
\path (3) edge [loop right] (3);
\path (4) edge [loop below] (4);
\path (5) edge [loop below] (5);
\end{tikzpicture}  
\hspace{1cm}
\begin{tikzpicture}[->,every node/.style={circle,draw},line width=1pt]
  \node (1) at (0,1.5) [ellipse,draw] {$1$};
  \node (2) at (1.5,2.625) {$2$};
  \node (3) at (3,1.5) {$3$};
  \node (4) at (2.25,0) {$4$}; 
  \node (5) at (0.75,0) {$5$};
\foreach \from/\to in {1/2,1/3,2/3,2/4,3/4,3/5,4/1,4/5,5/1,5/2}
\draw (\from) -- (\to);   
\path (1) edge [loop left] (1);
\path (2) edge [loop above] (2);
\path (3) edge [loop right] (3);
\path (4) edge [loop below] (4);
\path (5) edge [loop below] (5);
 \end{tikzpicture}  
\end{center}
\caption{The two simple cyclic graphs on 5 nodes for which \Cref{prop:SOScomputations} could not be computationally proved with the techniques employed.}
\label{fig:probsimplegraphs5}
\end{figure}
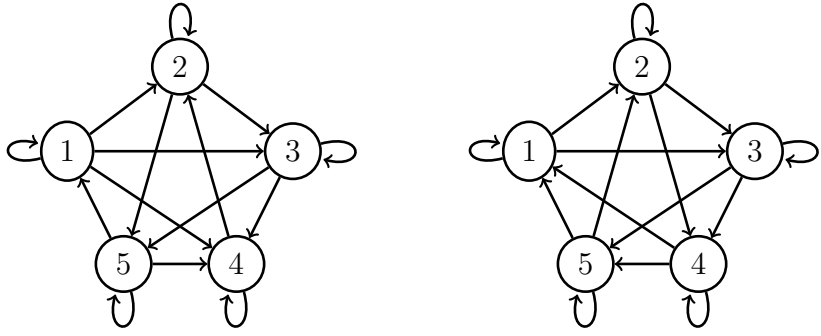

\section{Simple cyclic graphs}
\label{sec:stable}
In this section we 
establish our main result:  global identifiability of all Lyapunov models given by simple cyclic graphs.  Moreover, we can show that simple cyclic graphs give models that are algebraic subsets of the positive definite cone.
Our proofs exploit the parametrization of stable matrices $M$ that are solutions to the Lyapunov equation in terms of skew-symmetric matrices (matrices $K$ with $K^{\top}=-K$); recall \Cref{lem:truesignalskew}.

\begin{theorem}\label{th:simple}
Let $G=(V,E)$ be a directed graph with $V=[p]$.
\begin{itemize}
    \item[(i)] If $G$ is simple, then the model $\mathcal{M}_{G,C}$ is globally identifiable for all $C\in\PD_p$.
    \item[(ii)] If $C\in\PD_p$ is diagonal, then the model $\mathcal{M}_{G,C}$ is globally identifiable if and only if $G$ is simple.
\end{itemize}
\end{theorem}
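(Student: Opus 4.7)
The plan is to derive (ii) as a combination of (i) and \Cref{prop:non-simple}, so the work is concentrated in (i). To prove (i), I would suppose $M_1, M_2 \in \mathrm{Stab}(E)$ satisfy $\phi_{G,C}(M_1) = \phi_{G,C}(M_2) = \Sigma$ and set $N := M_2 - M_1 \in \mathbb{R}^E$, with the goal of showing $N = 0$. By \Cref{lem:truesignalskew}, each $M_i = (K_i - \tfrac{1}{2}C)\Sigma^{-1}$ for some skew-symmetric $K_i$, and subtracting gives $N = K\Sigma^{-1}$ with $K = K_2 - K_1$ skew-symmetric.

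The spectral heart of the argument is the similarity
\[
N \;=\; \Sigma^{1/2}\bigl(\Sigma^{-1/2} K \Sigma^{-1/2}\bigr)\Sigma^{-1/2},
\]
which presents $N$ as similar (via the positive definite matrix $\Sigma^{1/2}$) to the real skew-symmetric matrix $S := \Sigma^{-1/2} K \Sigma^{-1/2}$. Since real skew-symmetric matrices are normal with purely imaginary spectrum, $N$ is diagonalizable over $\mathbb{C}$ with purely imaginary eigenvalues, whence
\[
\mathrm{tr}(N^2) \;=\; \sum_k \lambda_k^2 \;\leq\; 0.
\]
Simplicity of $G$ enters in the opposite direction: for $i \neq j$, $E$ cannot contain both $i\to j$ and $j\to i$, so $N_{ij}N_{ji} = 0$ whenever $i \neq j$, giving
\[
\mathrm{tr}(N^2) \;=\; \sum_{i,j} N_{ij}N_{ji} \;=\; \sum_i N_{ii}^2 \;\geq\; 0.
\]
The two bounds force $\mathrm{tr}(N^2) = 0$, which makes every eigenvalue of $N$ vanish; combined with diagonalizability this yields $N = 0$, so $M_1 = M_2$. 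Part (ii) then follows, since the ``if'' direction is (i) and the ``only if'' direction is \Cref{prop:non-simple}.

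The main difficulty is finding a uniform argument that sidesteps the determinant-factorization approach of \Cref{sec:cyclic}, whose polynomial complexity blows up with the number of vertices. The resolution is to work directly with the skew-symmetric parametrization of solutions provided by \Cref{lem:truesignalskew} and observe that it conjugates $N$ to a real skew-symmetric matrix; once this is in hand, the simplicity of $G$ produces the contradicting sign on $\mathrm{tr}(N^2)$, and the proof becomes independent of $p$ and of the particular cyclic structure of $G$.
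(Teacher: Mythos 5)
Your proposal is correct and follows essentially the same route as the paper: both use \Cref{lem:truesignalskew} to write the difference of two solutions as $K\Sigma^{-1}$ with $K$ skew-symmetric, use simplicity to show the trace of its square is a sum of squared diagonal entries, and play this off against the non-positive spectrum to force the difference to vanish. Your version is a mild streamlining in that you extract both the purely imaginary spectrum and the diagonalizability directly from the single similarity $N \sim \Sigma^{-1/2}K\Sigma^{-1/2}$, whereas the paper first argues separately that $M^2$ is similar to a negative semidefinite symmetric matrix; the content is identical.
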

\begin{proof}
It suffices to prove $(i)$, as $(ii)$ then follows from \Cref{prop:non-simple}.

To prove (i), suppose $G$ is indeed simple.  Let $M_1,M_2 \in \mathrm{Stab}(E)$ be any two matrices that solve the Lyapunov equation \eqref{eq:lyapunoveq} for the same $\Sigma \in \mathcal{M}_{G,C}$. According to \Cref{lem:truesignalskew} there exist two skew-symmetric matrices $K_{1}$ and $K_{2}$ such that $M_{1}=(K_{1}-\frac{1}{2}C)\Sigma^{-1}$ and $M_{2}=(K_{2}-\frac{1}{2}C)\Sigma^{-1}$. For the difference we obtain
\begin{align*}
    M:=M_{1}-M_{2} = (K_{1}-\tfrac{1}{2}C)\Sigma^{-1}-(K_{2}-\tfrac{1}{2}C)\Sigma^{-1} =(K_{1}-K_{2})\Sigma^{-1}.
\end{align*}
The difference $K=K_{1}-K_{2}$ is again skew-symmetric, so that $M$ is the product of a skew-symmetric matrix $K$ and the positive definite matrix $\Sigma^{-1}$. 

As $\Sigma$ and $\Sigma^{-1}$ are positive definite, we may form the square roots $\Sigma^{\frac{1}{2}}$ and $\Sigma^{-\frac{1}{2}}$. Observe that $M=K \Sigma^{-1}$ is similar to $\tilde{M}=\Sigma^{-\frac{1}{2}} K \Sigma^{-1} \Sigma^{\frac{1}{2}}$, which is skew-symmetric since  
\begin{equation*}
\label{eq:skewsimilarity}
    \tilde{M}^{\top} = (\Sigma^{-\frac{1}{2}} K \Sigma^{-\frac{1}{2}})^{\top} = \Sigma^{-\frac{1}{2}} K^{\top} \Sigma^{-\frac{1}{2}}=-\Sigma^{-\frac{1}{2}} K \Sigma^{-\frac{1}{2}} = - \tilde{M}.
\end{equation*} 
The nonzero eigenvalues of real skew-symmetric matrices are purely imaginary. Let $i \lambda_{1},\dots,i\lambda_{p}$ with $\lambda_{i} \in \mathbb{R}$ be the eigenvalues of $\tilde{M}$. Then the similarity implies that $M$ has the same eigenvalues.

Observe now that the eigenvalues of $M^2$ are $-\lambda_{1}^2,\dots,-\lambda_{p}^2$ and thus $\mathrm{tr}(M^2) \leq 0$.  
As $M$ is supported over a simple graph, it holds for all pairs of indices $i \neq j$ that $m_{ij} m_{ji} = 0$. Hence, the diagonal of $M^2$ is given by the squared diagonal elements of $M$, i.e., $(M^2)_{ii}=m_{ii}^2$. 
It follows that
\begin{align*}
    0\leq  \sum_{i=1}^p m_{ii}^2 = \mathrm{tr}(M^2) =\sum_{i=1}^{p} -\lambda_{i}^2 \leq 0, 
\end{align*}
which implies that $\lambda_{i}^2=0$ for all $i = 1,\dots,p$. But this is only true if $\lambda_{i}=0$ for all $i = 1,\dots,p$. Therefore, all eigenvalues of $M$ are zero. Using the similarity of $M$ with $\tilde{M}$ and that skew-symmetric matrices are diagonalizable, we deduce that $M$ is similar to the zero matrix.  But then $M=0$ and consequently $M_1 = M_2$, which shows that the Lyapunov equation admits a unique 
solution in $\mathrm{Stab}(E)$.
\end{proof}

In addition to global identifiability, we have a generalization of \Cref{cor:completeModel} to general simple graphs.

\begin{corollary}
\label{conj:completeModel}
Let $G=(V,E)$ be a simple graph with $V=[p]$. Then $\mathcal{M}_{G,C}$ is an algebraic and thus closed subset of $\mathrm{PD}_p$.  If $G$ is complete then $\mathcal{M}_{G,C}=\mathrm{PD}_p$.
\end{corollary}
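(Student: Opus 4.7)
The plan is to mimic the proof of \Cref{cor:completeModel}, with \Cref{th:simple} playing the role of \Cref{theo:dagidenti}. The central step is to upgrade \Cref{th:simple} to the stronger statement that for every simple $G=(V,E)$ and every $\Sigma\in\PD_p$ (not merely $\Sigma\in\mathcal{M}_{G,C}$) the submatrix $A(\Sigma)_{\cdot,E}$ has full column rank $|E|$. Indeed, any element of $\ker A(\Sigma)_{\cdot,E}$ corresponds to some $M\in\mathbb{R}^E$ solving the homogeneous equation $M\Sigma+\Sigma M^\top=0$. Setting $K:=M\Sigma$ shows that $K$ is skew-symmetric, hence $M=K\Sigma^{-1}$, and the same trace computation as in the proof of \Cref{th:simple} yields $\mathrm{tr}(M^2)\le 0$. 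On the other hand, the simple support forces $\mathrm{tr}(M^2)=\sum_{i=1}^p m_{ii}^2\ge 0$, so $M=0$. The argument never uses the right-hand side $C$, which is precisely what allows the extension from $\mathcal{M}_{G,C}$ to all of $\PD_p$.

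With this strengthening in hand, the complete case is immediate: when $G$ is complete simple, $|E|=p(p+1)/2$ equals the number of rows of $A(\Sigma)$, so $A(\Sigma)_{\cdot,E}$ is square and invertible for every $\Sigma\in\PD_p$. The vectorized Lyapunov equation $A(\Sigma)_{\cdot,E}\,\mathrm{vec}(M)_E = -\mathrm{vech}(C)$ therefore admits a (unique) solution $M\in\mathbb{R}^E$ for every $\Sigma\in\PD_p$. Such $M$ is automatically stable by \Cref{rem:dropStability} since $C\in\PD_p$, and hence $\mathcal{M}_{G,C}=\PD_p$.

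For a general simple $G$, a matrix $\Sigma\in\PD_p$ belongs to $\mathcal{M}_{G,C}$ if and only if $-\mathrm{vech}(C)$ lies in the column span of $A(\Sigma)_{\cdot,E}$. By the full column rank just established, this is equivalent to the vanishing of every $(|E|+1)$-minor of the augmented matrix $\left(A(\Sigma)_{\cdot,E}\mid -\mathrm{vech}(C)\right)$. These minors are polynomials in the entries of $\Sigma$, so $\mathcal{M}_{G,C}$ is cut out of $\PD_p$ by polynomial equations, and is therefore an algebraic (and hence closed) subset of $\PD_p$. The only delicate point in the whole argument is the strengthening of \Cref{th:simple} in the first paragraph; once that observation is in place, the remaining steps reduce to the same linear-algebraic manipulations that were used to prove \Cref{cor:completeModel}.
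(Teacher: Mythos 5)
Your proof is correct and follows the paper's overall strategy (establish that $A(\Sigma)_{\cdot,E}$ has full column rank for \emph{every} $\Sigma\in\PD_p$, then read off both claims exactly as in the DAG case). The one place where you diverge is the globalization step: you reopen the proof of \Cref{th:simple} and observe that its skew-symmetric/trace argument applies verbatim to any $M\in\mathbb{R}^E$ solving the homogeneous equation $M\Sigma+\Sigma M^\top=0$, which is precisely the kernel of $A(\Sigma)_{\cdot,E}$; the paper instead keeps \Cref{th:simple} as a black box and, for an arbitrary $\Sigma_0\in\PD_p$, picks $M=-I_p$ and $C_0=2\Sigma_0$ so that $\Sigma_0\in\mathcal{M}_{G,C_0}$, then invokes \Cref{lem:rankidentifiability} for that auxiliary model. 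The two devices prove the same intermediate statement; yours is more transparent about why the result extends beyond $\mathcal{M}_{G,C}$ (the argument never uses $C$), while the paper's is shorter on the page. One cosmetic compression: from $\mathrm{tr}(M^2)=0$ you write ``so $M=0$,'' which still requires the eigenvalue-plus-diagonalizability step ($M$ is similar to the skew-symmetric matrix $\Sigma^{-1/2}K\Sigma^{-1/2}$, hence diagonalizable with all eigenvalues zero); since you explicitly invoke the proof of \Cref{th:simple}, which supplies exactly this, it is not a gap.
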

\begin{proof}
Consider first the case where $G$ is complete (with an edge between every pair of nodes).  Let $\Sigma_0\in\PD_p$ be an arbitrary positive definite matrix.  Choosing $M=-I_p$, the negated identity matrix, shows that $\Sigma_0$ belongs to the model $\mathcal{M}_{G,C_0}$ for $C_0=2\Sigma_0$.  By \Cref{th:simple} and \Cref{lem:rankidentifiability}, we obtain that the determinant of $A(\Sigma)_{\cdot,E}$ is non-zero at every matrix in $\mathcal{M}_{G,C_0}$ and, in particular, at $\Sigma_0$.  We conclude that $\det(A(\Sigma)_{\cdot,E})\not=0$ on all of $\PD_p$.  As in the proof of \Cref{cor:completeModel}, we deduce that $\mathcal{M}_{G,C}=\PD_p$ for all $C\in\PD_p$.

If $G$ is not complete, then it can be augmented to a complete graph $\bar G=(V,\bar E)$, and we may complete the proof in analogy to the proof of \Cref{cor:completeModel}.
\end{proof}

\section{Non-simple graphs}\label{sec:nonsimple}

In this section, we consider directed graphs $G=(V,E)$ that are allowed to be non-simple, i.e., may contain a two-cycle.  In our study, we restrict attention to the case where $C\in\PD_p$ is diagonal.    
\Cref{prop:non-simple} tells us that, for $C$ diagonal, a model $\mathcal{M}_{G,C}$ given by a non-simple graph $G$ can never be globally identifiable.  However,  non-simple graphs with at most $p(p+1)/2$ edges may still give generically identifiable models (\Cref{def:notions_identifiability}, \Cref{lem:dim}).  We are able to provide a combinatorial condition that is necessary for generic identifiability, and we computationally classify all graphs with $p\le 5$ nodes.  Our study reveals examples for which generic identifiability depends in subtle ways on the pattern of edges.

\begin{figure}[t]
    \centering
   \begin{tikzpicture}[->,every node/.style={circle,draw},line width=1pt, node distance=1.5cm]
  \node (1) at (0,0)     {$1$};
  \node (2) at (2,0)     {$2$};
  \node (3) at (4,0)     {$3$};

\foreach \from/\to in {2/3}
\draw (\from) -- (\to);  
\draw (1) to [bend right]  (2);
\draw (2) to [bend right] (1);
\path (1) edge [loop left] (1);
\path (2) edge [loop above] (2);
\path (3) edge [loop right] (3);
\end{tikzpicture}
    \caption{Non-simple graph on 3 nodes.}
    \label{fig:twocycleplus1}
\end{figure}
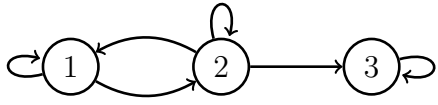

We begin with a small example.
\begin{example}\label{ex:generic}
Let $G=(V,E)$ be the graph from \Cref{fig:twocycleplus1}, a 2-cycle with an additional edge pointing to a third node, 
and let $C\in\PD_3$ be a diagonal matrix.
To inspect identifiability of $\mathcal{M}_{G,C}$, we may use the kernel basis of \Cref{ex:kernel} with the set of non-edges $E^c=\lbrace 1 \to 3, 3 \to 1,3 \to 2 \rbrace$. We find
\begin{align*}
   \det H(\Sigma)_{E^c,\cdot}=\det
    \begin{pmatrix}
         -\Sigma_{23} &-\Sigma_{33} & 0\\
        0 & \Sigma_{11} & \Sigma_{12} \\
        0 &  \Sigma_{12}& \Sigma_{22} 
  \end{pmatrix}
    = \Sigma_{23}\left(\Sigma_{12}^2-\Sigma_{11}\Sigma_{22}\right).
\end{align*}
Since $\mathcal{M}_{G,C}$ contains positive definite matrices with both vanishing and non-vanishing $\Sigma_{23}$, we conclude that $\mathcal{M}_{G,C}$ is generically (but not globally) identifiable.

Note that the matrices $\Sigma\in\mathcal{M}_{G,C}$ with $\Sigma_{23}=0$ are obtained precisely from the drift matrices in the lower-dimensional set $\lbrace M \in \mathrm{Stab}(E): m_{32}=0 \rbrace$.   Indeed, if $m_{32}=0$, then the situation is as if the $2 \to 3$ edge were removed, and we will see in \Cref{prop:trekcrit} that this implies $\Sigma_{23}=0$ when $C$ is diagonal.   Conversely, when solving for $\Sigma$ given a drift matrix $M\in\mathbb{R}^E$ we find that $\Sigma_{23}$ is a rational function of $(M,C)$ whose numerator is
\[
m_{32}\left( c_{11}m_{21}^2\mathrm{tr}(M) + c_{22} m_{11}^2\mathrm{tr}(M) +c_{22} \det(M)\right).
\]
As $C$ is positive definite and $M$ stable, the second factor is negative.  Thus, if $\Sigma=\Sigma(M,C)$ is a positive definite matrix in $\mathcal{M}_{G,C}$, then $\Sigma_{23}=0$ implies $m_{32}=0$.
\end{example}

By \Cref{lem:dim}, $|E|\le p(p+1)/2$ is a necessary condition for generic identifiability of the model of a graph $G=(V,E)$.  We now show how this bound may be improved by accounting for knowledge about vanishing covariances. 

\begin{definition}
A \emph{trek} is a sequence of edges of the form
\begin{equation*}
  l_m \leftarrow l_{m-1} \leftarrow \dots \leftarrow l_1 \leftarrow t \rightarrow r_1 \rightarrow \dots \rightarrow r_{n-1} \rightarrow r_{n}.
\end{equation*}
The node $t$ is the top node of the trek. The directed paths $l_m \leftarrow l_{m-1} \leftarrow \dots \leftarrow l_1$ and $r_1 \rightarrow \dots \rightarrow r_{n-1} \rightarrow r_{n}$ are the left and the right side of the trek, respectively.  The definition allows for one or both sides to be trivial, so  directed paths and also single nodes are treks. 
\end{definition}

From \citet[Corollary 2.3]{hansen2020}, we deduce the following fact.

\begin{proposition}
\label{prop:trekcrit}
Let $G=(V,E)$ be a directed graph with $V=[p]$, and let $C\in\PD_p$ be diagonal. If there is no trek from $i$ to $j$ in $G$, then $\Sigma_{ij}=0$ in all matrices $\Sigma \in \mathcal{M}_{G,C}$.
\end{proposition}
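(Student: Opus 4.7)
My plan is to use the explicit integral representation of the Lyapunov solution and expand its matrix exponential as a walk sum, then exploit that $C$ is diagonal to collapse the expression to a sum indexed by treks.

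First, I would recall that for a stable matrix $M$ and $C\in\PD_p$ the Lyapunov equation~\eqref{eq:lyapunoveq} is uniquely solved by
\begin{equation*}
    \Sigma \;=\; \int_0^\infty e^{Ms}\,C\,e^{M^\top s}\,ds.
\end{equation*}
Writing $C=\mathrm{diag}(c_1,\dots,c_p)$, this gives
\begin{equation*}
    \Sigma_{ij} \;=\; \sum_{k=1}^{p} c_k \int_0^\infty (e^{Ms})_{ik}\,(e^{Ms})_{jk}\,ds.
\end{equation*}
So it suffices to show that for each $k\in[p]$, if there is no trek from $i$ to $j$ with top node $k$, then $(e^{Ms})_{ik}(e^{Ms})_{jk}\equiv 0$ in $s$.

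Next, I would make the walk-sum interpretation of $(e^{Ms})_{ab}$ precise. Expanding $e^{Ms}=\sum_{n\ge 0}(Ms)^n/n!$, a typical summand of $(M^n)_{ab}$ has the form $M_{ak_1}M_{k_1k_2}\cdots M_{k_{n-1}b}$. Recalling the convention that $m_{ji}\neq 0$ requires $i\to j\in E$ (so $M_{ab}\neq 0$ requires $b\to a\in E$), such a summand is nonzero only when $b\to k_{n-1}\to\dots\to k_1\to a$ is a directed walk in $G$. Thus $(e^{Ms})_{ab}\not\equiv 0$ if and only if there is a directed walk from $b$ to $a$ in $G$ (including length zero when $a=b$).

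Combining the two facts: the nonvanishing of $(e^{Ms})_{ik}(e^{Ms})_{jk}$ as a function of $s$ requires both a directed walk from $k$ to $i$ and a directed walk from $k$ to $j$; by definition, these two walks together are exactly a trek from $i$ to $j$ with top node $k$. If no trek from $i$ to $j$ exists, then for every $k\in[p]$ at least one of the two factors is identically zero, every summand in the integral vanishes, and therefore $\Sigma_{ij}=0$.

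The main obstacle is purely bookkeeping: one needs to keep the transposition convention straight (edge $i\to j$ corresponds to the matrix entry $m_{ji}$, so walks in $G$ correspond to products of $M$-entries in the opposite index order) and to observe that the characterization of a trek as a pair (left walk, right walk) sharing a common source node aligns precisely with the factorization $(e^{Ms})_{ik}(e^{Ms})_{jk}$. There are no analytic subtleties since the stability of $M$ ensures the integral converges absolutely and each entry of $e^{Ms}$ is an analytic function of $s$, so identical vanishing in $s$ follows as soon as every Taylor coefficient vanishes. Alternatively, this is precisely the conclusion of \citet[Corollary 2.3]{hansen2020}, which can simply be invoked.
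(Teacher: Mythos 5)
Your proof is correct. Note, however, that the paper does not actually prove this proposition: it simply deduces it from \citet[Corollary 2.3]{hansen2020}, which is the ``alternative'' you mention in your last sentence. What you have written is therefore a self-contained derivation of the cited fact, via the integral representation $\Sigma=\int_0^\infty e^{Ms}Ce^{M^\top s}\,ds$ and the walk-sum expansion of $e^{Ms}$; this is essentially the standard argument behind the trek rule for Ornstein--Uhlenbeck equilibria, and it correctly isolates the two places where the hypotheses enter (stability of $M$ for convergence, diagonality of $C$ for the single top node $k$). Two small points of care: first, your ``if and only if'' claim for $(e^{Ms})_{ab}\not\equiv 0$ is an overclaim in the ``if'' direction, since cancellations among walk monomials could in principle make an entry vanish even when a walk exists --- but only the ``only if'' direction (no walk implies identically zero) is used, and that direction is immediate from the term-by-term vanishing of the Taylor coefficients. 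Second, the paper's definition of a trek is in terms of directed \emph{paths}, whereas your argument produces directed \emph{walks} from $k$ to $i$ and from $k$ to $j$; since any walk contains a path with the same endpoints, the existence of such a pair of walks still yields a trek, so the contrapositive you need goes through. With those caveats acknowledged, the argument is complete.
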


\begin{example}
\label{ex:subgraphnotiden}
Let $C\in\PD_4$ be diagonal.  Then the left graph $G_1=(V,E_1)$ in \Cref{fig:intro} defines a generically identifiable model but its subgraph $G_2=(V,E_2)$ does not.  This example stresses that global identifiability is needed in \Cref{prop:subgraph}.  But why is $\mathcal{M}_{G_2,C}$ non-identifiable despite $G_2$ having fewer edges?  We observe that $G_2$ contains no trek between 2 and 4 and no trek between 3 and 4. \Cref{prop:trekcrit} yields $\Sigma_{24}=\Sigma_{34}=0$. Although the $\PD_4$-cone has dimension $\binom{4+1}{2}=10$, the existence of the constraints $\Sigma_{24}=\Sigma_{34}=0$ implies that $\dim(\mathcal{M}_{G_2,C})\leq 10-2=8$. Since $\vert E_2\vert =9>8$, non-identifiability  follows from by \Cref{lem:dim}.

As a last subtlety, we emphasize that if we remove one of the edges $2 \to 1$, $3 \to 1$, or $4\to 1$ of $G_2$, we are left again with a generically identifiable model. 
\end{example}

\begin{figure}[t]
\centering
\begin{minipage}{.4\textwidth}
        \begin{tikzpicture}[->,every node/.style={circle,draw},line width=1pt, node distance=1.5cm]
  \node (1) at (2,1)     {$1$};
  \node (2) at (4,2)     {$2$};
  \node (3) at (4,0)     {$3$};
  \node (4) at (0,1)     {$4$};

\foreach \from/\to in {2/1,3/1}
\draw (\from) -- (\to);  
\draw (2) to [bend right]  (3);
\draw (3) to [bend right] (2);
\draw (4) to [bend right] (1);
\draw[color=red] (1) to [bend right] (4);
\path (1) edge [loop above] (1);
\path (2) edge [loop right] (2);
\path (3) edge [loop right] (3);
\path (4) edge [loop above] (4);
\end{tikzpicture} 
\end{minipage}\qquad\qquad
\begin{minipage}{.4\textwidth}
       \begin{tikzpicture}[->,every node/.style={circle,draw},line width=1pt, node distance=1.5cm]
  \node (1) at (2,1)     {$1$};
  \node (2) at (4,2)     {$2$};
  \node (3) at (4,0)     {$3$};
  \node (4) at (0,1)     {$4$};

\foreach \from/\to in {4/1,2/1,3/1}
\draw (\from) -- (\to);  
\draw (2) to [bend right]  (3);
\draw (3) to [bend right] (2);
\path (1) edge [loop above] (1);
\path (2) edge [loop right] (2);
\path (3) edge [loop right] (3);
\path (4) edge [loop above] (4);
\end{tikzpicture} 
\end{minipage}
\caption{Left: graph $G_1$ on 4 nodes with $\mathcal{M}_{G_1,C}$ generically identifiable. Right: subgraph $G_2$ of $G_1$ such that $\mathcal{M}_{G_2,C}$ is non-identifiable. $C\in\PD_4$ is diagonal.}
\label{fig:intro}
\end{figure}
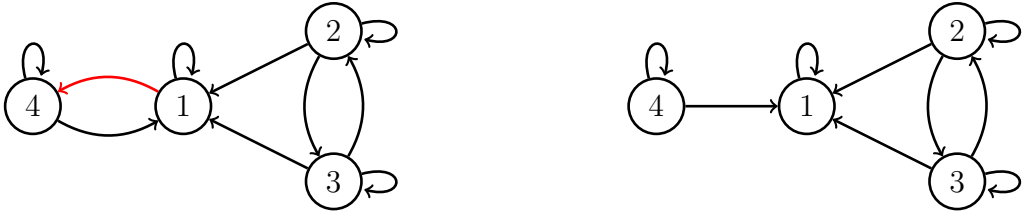

The ideas in \Cref{ex:subgraphnotiden} can be generalized into a sharper necessary condition for identifiability that is a consequence of \Cref{lem:dim} and \Cref{prop:trekcrit}.  

\begin{corollary}
\label{lem:neccritident}
Let $G=(V,E)$ be a directed graph with $V=[p]$.  If $\mathcal{M}_{G,C}$ is generically identifiable for a diagonal matrix $C\in\PD_p$, then it has to hold that 
\begin{equation}
\label{eq:neccriteq}
    |E| \;\leq\; \frac{p(p+1)}{2} - \#\big\lbrace \,\{i,j\}: i,j \in V \text{ with no trek between them}\, \big\rbrace. 
\end{equation}
\end{corollary}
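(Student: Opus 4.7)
The plan is to chain two bounds: one from dimension counting in the parameter space (\Cref{lem:dim}) and one from counting linear constraints in the observation space (\Cref{prop:trekcrit}).

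First I would invoke \Cref{lem:dim} contrapositively. That lemma, together with the last sentence of \Cref{lem:rankidentifiability} (failure of generic identifiability forces non-identifiability), implies that a generically identifiable model must satisfy $|E| \leq \dim(\mathcal{M}_{G,C})$. Thus it suffices to show that $\dim(\mathcal{M}_{G,C}) \leq p(p+1)/2 - N$, where $N$ denotes the number of unordered pairs $\{i,j\}$ in $V$ with no trek between $i$ and $j$ in $G$.

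Second, I would embed $\mathcal{M}_{G,C} \subseteq \PD_p$ into the ambient space of $p \times p$ symmetric matrices, which has dimension $p(p+1)/2$. By \Cref{prop:trekcrit}, each non-trek pair $\{i,j\}$ imposes the linear constraint $\Sigma_{ij} = 0$ on every covariance matrix in the model. These $N$ constraints are supported on distinct off-diagonal coordinates of the symmetric matrix and are therefore linearly independent, cutting out an affine subspace of codimension exactly $N$. Consequently the model is contained in an algebraic set of dimension at most $p(p+1)/2 - N$, giving the desired dimension bound.

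Combining the two estimates yields $|E| \leq p(p+1)/2 - N$, which is the claim. The steps are essentially immediate consequences of the cited results; the only point worth a sanity check is that diagonal pairs $\{i,i\}$ do not contribute to $N$, which is automatic because the single-vertex sequence $i$ is a trek (so every node is trek-connected to itself), consistent with the fact that diagonal entries $\Sigma_{ii}$ cannot vanish on $\PD_p$. No serious obstacle is anticipated.
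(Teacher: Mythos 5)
Your argument is correct and is exactly the paper's intended proof: the authors state the corollary as "a consequence of \Cref{lem:dim} and \Cref{prop:trekcrit}" and illustrate precisely this dimension count in \Cref{ex:subgraphnotiden} (each non-trek pair forces $\Sigma_{ij}=0$, cutting the model's dimension, after which \Cref{lem:dim} applies). Nothing further is needed.
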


With this criterion in hand, we can construct graphs of arbitrary size $p$ and fewer than $p(p+1)/2$ edges that yield non-identifiable models. 

\begin{corollary}
Consider the graph $G=(V,E)$ with $p\geq 4$ nodes displayed in \Cref{fig:familyfig}. The model $\mathcal{M}_{G,C}$ is non-identifiable for any diagonal $C\in\PD_p$. 
\end{corollary}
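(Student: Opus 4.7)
The plan is to apply the necessary condition for generic identifiability given by Corollary \ref{lem:neccritident} and show that the graph $G$ from Figure \ref{fig:familyfig} violates the edge-count bound, which then forces non-identifiability via Lemma \ref{lem:dim}. Since generic non-identifiability implies (by the final claim of Lemma \ref{lem:rankidentifiability}) that $|\mathcal{F}_{G,C}(M_0)|=\infty$ for every $M_0 \in \mathrm{Stab}(E)$, this yields the full non-identifiability statement.

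Concretely, I would first read off $|E|$ from the figure as a function of $p$. Next, I would identify, using the combinatorial definition of a trek, the set of unordered pairs $\{i,j\} \subseteq V$ such that there is no top node $t \in V$ with a directed path from $t$ to $i$ and a directed path from $t$ to $j$. In the kinds of graphs that appear in this paper's discussion (e.g.\ Example \ref{ex:subgraphnotiden}), this count grows with $p$ because a 2-cycle or a hub structure blocks directed ancestry between most pairs of vertices outside a small core. I would then verify the inequality
\begin{equation*}
|E| \; > \; \frac{p(p+1)}{2} \;-\; \#\bigl\{\{i,j\}: i,j \in V, \text{ no trek between them in }G\bigr\}
\end{equation*}
for every $p \geq 4$, which is the negation of \eqref{eq:neccriteq}.

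With this inequality, Corollary \ref{lem:neccritident} immediately rules out generic identifiability. By Proposition \ref{prop:trekcrit}, the model $\mathcal{M}_{G,C}$ lies inside the linear subspace of $\PD_p$ cut out by the equations $\Sigma_{ij}=0$ for trek-disconnected pairs, so $\dim(\mathcal{M}_{G,C}) \leq \frac{p(p+1)}{2} - \#\{\text{trek-disconnected pairs}\}$. Since $|E| = \dim(\mathrm{Stab}(E))$ strictly exceeds this upper bound on $\dim(\mathcal{M}_{G,C})$, Lemma \ref{lem:dim} yields non-identifiability.

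The main potential obstacle is the bookkeeping: one has to confirm that the pairs identified as trek-disconnected really are disconnected (some directed paths in cyclic graphs are easy to overlook, especially through the 2-cycle) and that the edge count is correct. The proof is otherwise routine once the figure's structure is translated into these two counts. I would expect the argument to come down to a clean formula such as $|E| = p(p+1)/2 - k(p)$ with $k(p)$ strictly smaller than the number of trek-disconnected pairs, so that the inequality \eqref{eq:neccriteq} is violated by a definite margin for all $p \geq 4$.
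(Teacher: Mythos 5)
Your plan is exactly the paper's proof: one counts $|E| = 2 + (p-1) + p = 2p+1$ edges and observes that every pair among $\{2,\dots,p\}$ except $\{2,3\}$ is trek-disconnected, giving $\binom{p-1}{2}-1$ such pairs, so the bound in \Cref{lem:neccritident} becomes $\frac{p(p+1)}{2} - \binom{p-1}{2} + 1 = 2p < |E|$, and \Cref{lem:dim} (via the dimension drop from \Cref{prop:trekcrit}) yields non-identifiability. The only step beyond your outline is this routine bookkeeping, which works out exactly as you anticipated.
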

 
\begin{figure}[t]
\centering
\begin{tikzpicture}[->,every node/.style={circle,draw},line width=1pt, node distance=1.5cm]
  \node (1) at (2,1)     {$1$};
  \node (2) at (4,2)     {$2$};
  \node (3) at (4,0)     {$3$};
  \node (4) at (0,-1)     {$4$};
  \node (p) at (0,3)     {$p$};
  \node (5) at (0,0)     {$5$};

\foreach \from/\to in {4/1,2/1,3/1,p/1,5/1}
\draw (\from) -- (\to);  
\draw (2) to [bend right]  (3);
\draw (3) to [bend right] (2);
\draw[dotted] (0,1) to [] (1);
\draw[dotted] (0,2) to [] (1);
\path (1) edge [loop above] (1);
\path (2) edge [loop right] (2);
\path (3) edge [loop right] (3);
\path (4) edge [loop left] (4);
\path (5) edge [loop left] (5);
\path (p) edge [loop left] (p);
\end{tikzpicture} 
\caption{Graph $G$ with $V=[p]$ such that $\mathcal{M}_{G,C}$ is non-identifiable for diagonal $C\in\PD_p$.}
\label{fig:familyfig}
\end{figure}
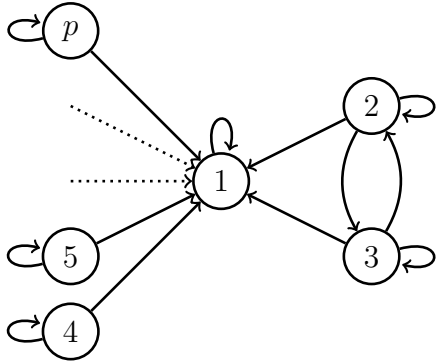

\begin{proof}
The number of parameters $\vert E\vert$ is
\begin{align*}
    2 \, \text{ (edges from 2-cycle) } &+\,\, p-1 \text{ (edges pointing to node 1) } \\
    +\,\, &p \text{ (parameters due to the selfloops)} = \, \, 2p+1.
\end{align*}
There are no treks between any pair of nodes $\lbrace 2,\dots,p \rbrace$ except for the pair $(2,3)$. This results in $\binom{p-1}{2}-1$ (unordered) pairs of nodes with no trek. \Cref{lem:neccritident} implies that 
\begin{align*}
&\dim(\mathcal{M}_{G,C})\leq \frac{p(p+1)}{2}-\binom{p-1}{2}+1 =2p.
\end{align*}
\end{proof}

Unfortunately, the criterion in \Cref{lem:neccritident} is not sufficient.

\begin{figure}[t]
\centering
\begin{minipage}{.4\textwidth}

 \begin{tikzpicture}[->,every node/.style={circle,draw},line width=1pt, node distance=1.5cm]
  \node (1) at (0,.8)     {$1$};
  \node (2) at (2,1.6)     {$2$};
  \node (3) at (2,0)     {$3$};
  \node (4) at (4,0.8)     {$4$};

\foreach \from/\to in {2/1,3/1,2/4,3/4}
\draw (\from) -- (\to);  
\draw (2) to [bend right]  (3);
\draw (3) to [bend right] (2);
\path (1) edge [loop above] (1);
\path (2) edge [loop above] (2);
\path (3) edge [loop below] (3);
\path (4) edge [loop above] (4);
\end{tikzpicture} 
\end{minipage}\hspace{1cm}
\begin{minipage}{.4\textwidth}
 \begin{tikzpicture}[->,every node/.style={circle,draw},line width=1pt, node distance=1.5cm]
  \node (1) at (0,.8)     {$1$};
  \node (2) at (2,1.6)     {$2$};
  \node (3) at (2,0)     {$3$};
  \node (4) at (4,0.8)     {$4$};

\foreach \from/\to in {1/2,1/3,4/2,4/3}
\draw (\from) -- (\to);  
\draw (2) to [bend right]  (3);
\draw (3) to [bend right] (2);
\path (1) edge [loop above] (1);
\path (2) edge [loop above] (2);
\path (3) edge [loop below] (3);
\path (4) edge [loop above] (4);
\end{tikzpicture} 
\end{minipage}
\caption{Left: graph fulfilling the criterion in \Cref{lem:neccritident}, yet yields a non-identifiable model. Right: Reversing edges retains non-identifiability, due to \Cref{lem:neccritident}, as $\Sigma_{14}=0$.}
\label{fig:necgraphnotiden}
\end{figure}
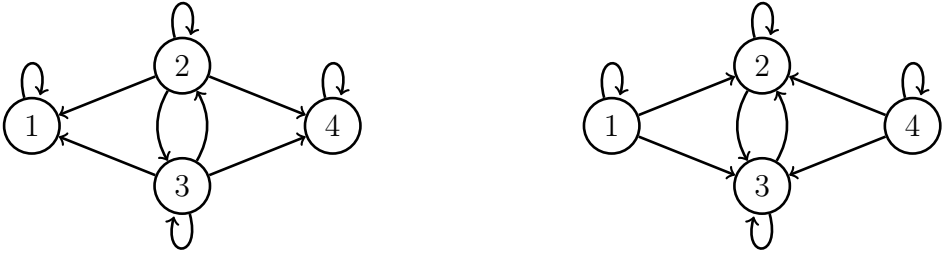

\begin{example}
\label{ex:diamnotid}
Let $G_{1}=(V,E)$ be the left graph in \Cref{fig:necgraphnotiden}.
Graph $G_1$ fulfills the necessary condition of \Cref{lem:neccritident} as the number of parameters is $6+4=10$ and all pairs of nodes are connected with a trek, which is why the right side of equation \eqref{eq:neccriteq} is also $\binom{4+1}{2}=10$. However, $A(\Sigma)_{\cdot,E} \in \mathbb{R}^{10 \times 10}$ does not have full rank because the columns of $A(\Sigma)$ may be linearly combined to 
\begin{align*}
    &\Sigma_{13} A(\Sigma)_{\cdot,2 \to 1} +\Sigma_{23} A(\Sigma)_{\cdot,2 \to 2}+ \Sigma_{33} A(\Sigma)_{\cdot, 2 \to 3}+\Sigma_{34} A(\Sigma)_{\cdot,2 \to 4}\\
    -&\Sigma_{12} A(\Sigma)_{\cdot, 3 \to 1} -\Sigma_{22} A(\Sigma)_{\cdot, 3 \to 2} - \Sigma_{23} A(\Sigma)_{\cdot, 3 \to 3} -\Sigma_{24} A(\Sigma)_{\cdot, 3 \to 4}=0.
\end{align*}
Therefore, the model $\mathcal{M}_{G_1,C}$ is non-identifiable for $C$ diagonal despite fulfilling the necessary criterion. The right graph in \Cref{fig:necgraphnotiden} yields a  non-identifiable model for the simple reason that the necessary condition of \Cref{lem:neccritident} is violated due to the absence of a trek between nodes $1$ and $4$. 
\end{example}

For smaller examples, we may check generic identifiability by choosing random drift matrices and determining whether the resulting matrix $\Sigma$ satisfies the rank condition from \Cref{lem:rankidentifiability}.  When this does not succeed we can check symbolically whether the corresponding restriction of the coefficient matrix $A(\Sigma)$ or the restricted kernel basis $H(\Sigma)$ from \Cref{theo:kernelcrit} is rank-deficient, thus implying non-identifiability.  
We implemented this strategy for all non-simple graphs with $p\le 5$ nodes and less than $p(p+1)/2$ parameters.  As justified by \Cref{prop:Cdiag}, we took $C=I_p$ in our computations.  This led to the results displayed in \Cref{tab:nonsimpletable}, which shows that the majority of graphs are generically identifiable. The details of the computations can be found at \url{https://mathrepo.mis.mpg.de/LyapunovIdentifiability}.

\begin{table}[tbhp]
{\footnotesize
  \caption{Classification of models with $p=3,4,5$ nodes and $C=I_p$.  The last
column displays the number of non-identifiable models whose underlying graphs satisfy the necessary criterion for generic identifiability in \Cref{lem:neccritident}.}
\label{tab:nonsimpletable}
}
\begin{center}
\begin{tabular}{||c |c |c |c||} 
 \hline
 nodes  & total non-simple  & non-identifiable &   non-identifiable satisfying \eqref{eq:neccriteq}\\ [0.5ex] 
 \hline\hline
 3 & 2 & 0 & 0 \\ 
 \hline
 4 & 80 & 3 & 2\\
 \hline
 5 & 4862 & 68 & 37 \\ 
 \hline
\end{tabular}
\end{center}
\end{table}

\section{Conclusion}
\label{sec:conclusion}

Graphical continuous Lyapunov models offer a new perspective on modeling the covariance structure of multivariate data by relating each observation to an underlying continuous-time dynamic process.  The resulting covariance structure is determined by the continuous Lyapunov equation.  Our work addresses the fundamental problem of whether, up to joint scaling, the parameters of the dynamic process can be identified from the covariance matrix of the cross-sectional equilibrium observations.  Our main contribution shows that simple graphs yield globally identifiable models, and that the graph being simple is necessary and sufficient for  global identifiability in the case where the volatility matrix $C$ is diagonal.  Moreover, we are able to show that the models of simple graphs are closed algebraic subsets of the positive definite cone.  In particular, the models of complete simple graphs equal the entire positive definite cone.  

Our analysis of directed acyclic graphs (DAGs) highlights block structure in the coefficient matrix for the Lyapunov equation.  This leads to a straightforward proof of global identifiability and also reveals that the determinant studied in our rank conditions is a positive sum of squares in the entries of a Cholesky factor.  This sum of squares property was also observed in small cyclic graphs.

While we were able to characterize global identifiability, we know less about  generic identifiability of graphical Lyapunov models.  Our results include an effective necessary but not sufficient graphical criterion for non-simple graphs to be generically identifiable.  We also obtain a computational classification of graphs with up to 5 nodes, and we hope that future research will lead to an improved understanding of generic identifiability of the models we considered.

\renewcommand{\bibfont}{\small}
\renewcommand\refname{\centerline{\sc References}}
\bibliography{lyapunov}

\newpage

\appendix

\section{Volatility matrix: diagonal vs. non-diagonal PD matrix}\label{sec:C}
This section aims at providing insight into the need of the diagonality constraint on the volatility matrix $C\in\PD_p$ of the Lyapunov equation to ensure that some of the stronger results of the paper hold. 

\begin{example}\label{ex:failureTrek}
Let $G$ be the 2-cycle with an additional third node, so $V=[3]$ and $E=\{1\to 1,1\to 2,2\to 1,2\to 2,3\to 3\}$, which encodes drift matrices
\[
M=\begin{pmatrix}
     m_{11} & m_{12} & 0\\
     m_{21} & m_{22} & 0\\
     0 & 0 & m_{33}
\end{pmatrix}.
\]
Let $C=(c_{ij})\in\PD_3$.  Clearly, the graph does not contain any treks between nodes 1 and 3, nor between nodes 2 and 3.   However, a matrix $\Sigma=\phi_{G,C}(M)$ has
\begin{align*}
    \Sigma_{13} &= \frac{c_{23} m_{12}-c_{13} (m_{22}+m_{33})}{(m_{11}m_{22}-m_{12} m_{21})+m_{33}(m_{11}+m_{22}+m_{33})},
\end{align*}
with a denominator that is positive on $\mathrm{Stab}(E)$ and a numerator that is constant zero only if $c_{13}=c_{23}=0$.  The same holds for $\Sigma_{23}$ by symmetry.  This example serves to highlight that \Cref{prop:trekcrit} may be false when $C$ is not diagonal.  Indeed, the treks would need to be allowed to move along new edges that reflect presence of non-zero diagonal entries in $C$; compare \cite{hansen2020}.
\end{example}

\begin{example}\label{ex:failureSec2}
Consider again the 2-cycle with an additional third node from the previous example.  Again, consider an arbitrary matrix  $C=(c_{ij})\in\PD_3$.  The kernel basis of \Cref{ex:kernel} restricted to the set of non-edges $E^c=\lbrace 1 \to 3,  3 \to 1, 2 \to 3, 3 \to 2 \rbrace$, namely
\begin{align*}
   H(\Sigma)_{E^c,\cdot}=
    \begin{pmatrix}
         -\Sigma_{23} & -\Sigma_{33} & 0 \\
        0 & \Sigma_{11} & \Sigma_{12} \\
        \Sigma_{13}  & 0 & -\Sigma_{33} \\
          0  & \Sigma_{12} & \Sigma_{22} 
  \end{pmatrix},
\end{align*}
is rank deficient for any $\Sigma\in\PD_3$ if and only if $\Sigma_{13}=\Sigma_{23}=0$. 
Adding this constraint to the Lyapunov equation yields $c_{13}=c_{23}=0$. Therefore, it follows from \Cref{theo:kernelcrit} that $\mathcal{M}_{G,C}$ is globally identifiable for any $C=(c_{ij})\in\PD_3$ in which $c_{13}$ and $c_{23}$ do not vanish simultaneously.

Observe that this provides a counterexample to \Cref{prop:non-simple} and \Cref{prop:subgraph:Cdiag} when dropping the diagonality assumption. To begin with, such $\mathcal{M}_{G,C}$ is an instance of a globally identifiable model associated to a non-simple graph. Moreover, the subgraph $H$ obtained by removing node 3 from $G$ defines a non-identifiable model for all positive definite volatility matrices by \Cref{lem:dim}. 

For the sake of completeness, note that, by \Cref{ex:failureTrek}, $c_{13}=c_{23}=0$ completely describes when the rank of $H(\Sigma)_{E^c,\cdot}$ drops for all $\Sigma\in\mathcal{M}_{G,C}$.
In other words, the model is non-identifiable if and only if $c_{13}=c_{23}=0$ and globally identifiable otherwise. 

\newpage

\end{example}

\section{Spectral description, kernel and factorization}\label{sec:app}
Here, we collect spectral properties of $A(\Sigma)$, derived more conveniently for its square $p\times p$ version
\[
\tilde{A}(\Sigma)=\Sigma \otimes I_p + (I_p \otimes \Sigma)K_p,
\]
which features in \Cref{lem:ASigma}.  We will then use this information to clarify that $\det(\Sigma)$ is a factor of $\det(A(\Sigma)_{\cdot,E})$ for complete graphs which have edge sets of size $|E|=p(p+1)/2$; see \Cref{cor:detSfactors}.

\begin{theorem}
\label{theo:Asigmaeigenvalues}
Let $\Sigma\in\PD_p$, and let $(\lambda_{i})_{i \in [p]}$ be its eigenvalues with corresponding orthogonal eigenvectors $(z_{i})_{i \in [p]}$. 
\begin{itemize}
    \item[(i)] The matrix $\tilde{A}(\Sigma)$ has rank $p(p+1)/2$, and \eqref{eq:HSigma} gives a basis for its kernel.
    \item[(ii)] The transposed matrix $\tilde{A}(\Sigma)^\top$ has rank $p(p+1)/2$, and a basis for its kernel is given by $\mathrm{vec}(e_i\otimes e_j-e_j\otimes e_i)$ for $1\le i< j\le p$.
    \item[(iii)] Counting with multiplicities, the $p(p+1)/2$ non-zero eigenvalues of $\tilde{A}(\Sigma)$ and of $\tilde{A}(\Sigma)^\top$ are given by the sums $\lambda_i+\lambda_j$ for $1\le i\le j\le p$ and for either matrix the associated set of orthogonal eigenvectors is $\mathrm{vec}(z_{i}\otimes z_{j}+z_{j}\otimes z_{i})$ for $1\le i\le j\le p$.
\end{itemize}

\begin{proof}
  (i) follows from~\eqref{eq:HSigma}, and (ii) follows from the symmetry of the Lyapunov (matrix) equation.
  
  For (iii), the claim about $\tilde A(\Sigma)$ follows from the calculation
  \begin{align*}
      &(z_{i}\otimes z_{j}+z_{j}\otimes z_{i})\Sigma+
      \Sigma (z_{i}\otimes z_{j}+z_{j}\otimes z_{i})^\top \\
      &= 
      \left[\lambda_j ( z_{i}\otimes z_{j})+\lambda_i (z_{j}\otimes z_{i}) \right]+ \left[\lambda_i ( z_{i}\otimes z_{j})+\lambda_j(z_{j}\otimes z_{i})\right]\\
      &=(\lambda_i+\lambda_j)(z_{i}\otimes z_{j}+z_{j}\otimes z_{i}).
  \end{align*}
  The transpose $\tilde A(\Sigma)^\top=\Sigma\otimes I_p+K_p(I_p\otimes\Sigma)$ encodes the Lyapunov equation with $M$ replaced by $M^\top$ and the claim about $\tilde A(\Sigma)^\top$ follows from the symmetry of the matrices $z_{i}\otimes z_{j}+z_{j}\otimes z_{i}$.  The orthogonality of the eigenvectors holds because 
  \[
  \mathrm{tr}\left((z_{i}\otimes z_{j}+z_{j}\otimes z_{i})(z_{k}\otimes z_{l}+z_{l}\otimes z_{k})\right)=0
  \]
  unless $\{i,j\}=\{k,l\}$.
\end{proof}   
\end{theorem}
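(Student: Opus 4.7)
The central observation is that $\tilde A(\Sigma)$ represents the linear operator $L_\Sigma: M \mapsto M\Sigma + \Sigma M^\top$ on $\mathbb{R}^{p\times p}$ under vectorization, so the theorem reduces to studying $L_\Sigma$ and its adjoint directly on the matrix space. For part (i), note that $L_\Sigma(M) = 0$ is equivalent to $M\Sigma$ being skew-symmetric. Since $\Sigma$ is invertible, right-multiplication by $\Sigma^{-1}$ bijects the $p(p-1)/2$-dimensional space of skew-symmetric matrices onto $\ker L_\Sigma$, yielding rank $p^2 - p(p-1)/2 = p(p+1)/2$. Transporting the elementary skew basis $K^{(k,l)} = e_k e_l^\top - e_l e_k^\top$ (and using the equivalent representation $M = \Sigma K$ for skew $K$, which is also annihilated by $L_\Sigma$ by symmetry) recovers the basis displayed in \eqref{eq:HSigma}.

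For part (ii), I would compute the adjoint of $L_\Sigma$ under the Frobenius inner product. Using cyclicity of the trace and symmetry of $\Sigma$,
\[
\langle Y, L_\Sigma(X) \rangle = \mathrm{tr}(Y^\top X\Sigma) + \mathrm{tr}(Y^\top \Sigma X^\top) = \langle (Y + Y^\top)\Sigma, X\rangle,
\]
so $L_\Sigma^\ast(Y) = (Y+Y^\top)\Sigma$. Since $\Sigma$ is invertible, the kernel is exactly the space of skew-symmetric matrices, giving the stated basis $e_i e_j^\top - e_j e_i^\top$ for $i<j$ and again rank $p(p+1)/2$.

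For part (iii), I would guess that the symmetric matrices $M_{ij} = z_i z_j^\top + z_j z_i^\top$, for $1 \le i \le j \le p$, are eigenvectors of $L_\Sigma$. Using $\Sigma z_k = \lambda_k z_k$ and $M_{ij}^\top = M_{ij}$, a direct computation gives
\[
L_\Sigma(M_{ij}) = (\lambda_j z_i z_j^\top + \lambda_i z_j z_i^\top) + (\lambda_i z_i z_j^\top + \lambda_j z_j z_i^\top) = (\lambda_i + \lambda_j) M_{ij}.
\]
This produces $p(p+1)/2$ eigenpairs which, combined with the $p(p-1)/2$-dimensional kernel from (i), account for all of $\mathbb{R}^{p\times p}$. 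Orthogonality reduces to $\mathrm{tr}(M_{ij} M_{kl}) = 0$ for $\{i,j\} \ne \{k,l\}$, which follows by expanding the product and invoking orthonormality of the $z_a$. The non-zero spectrum of $\tilde A(\Sigma)^\top$ then matches that of $\tilde A(\Sigma)$ because any matrix shares eigenvalues with its transpose.

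The main obstacle I anticipate is the bookkeeping in (iii): confirming that the $p(p+1)/2$ eigenvectors indexed by $i \le j$ are linearly independent and form a direct-sum complement to the kernel from (i), so that the list $\{\lambda_i + \lambda_j : i \le j\}$ exhausts the non-zero spectrum with the stated multiplicities. Everything else is a direct consequence of the symmetry of the Lyapunov operator and routine linear algebra.
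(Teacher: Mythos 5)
Your treatment of (i), your explicit adjoint computation $L_\Sigma^\ast(Y)=(Y+Y^\top)\Sigma$ for (ii), and your eigenpair calculation for $\tilde A(\Sigma)$ itself in (iii) are correct and essentially the paper's argument; if anything, computing the adjoint is more transparent than the paper's one-line appeal to ``the symmetry of the Lyapunov equation.'' The bookkeeping you worry about at the end is harmless: the matrices $M_{ij}=z_iz_j^\top+z_jz_i^\top$, $i\le j$, are pairwise orthogonal and nonzero, hence linearly independent, and together with the $p(p-1)/2$-dimensional kernel from (i) they exhaust $\mathbb{R}^{p\times p}$ by a dimension count, so the list $\{\lambda_i+\lambda_j : i\le j\}$ does account for the full non-zero spectrum.

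The genuine gap is the eigenvector claim for $\tilde A(\Sigma)^\top$ in (iii). The observation that a matrix shares its eigenvalues with its transpose delivers the spectrum with multiplicities, but the theorem also asserts that the \emph{same} vectors $\mathrm{vec}(z_iz_j^\top+z_jz_i^\top)$ are eigenvectors of $\tilde A(\Sigma)^\top$, and your argument does not address this. In fact, your own adjoint formula shows that this assertion fails in general: for symmetric $M_{ij}$ one has
\[
L_\Sigma^\ast(M_{ij})=\bigl(M_{ij}+M_{ij}^\top\bigr)\Sigma=2M_{ij}\Sigma=2\bigl(\lambda_j\,z_iz_j^\top+\lambda_i\,z_jz_i^\top\bigr),
\]
which is a scalar multiple of $M_{ij}$ only when $\lambda_i=\lambda_j$ (for $p=2$ and $\Sigma=\mathrm{diag}(1,2)$, the vector $(0,1,1,0)^\top$ is sent to $(0,2,4,0)^\top$). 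The eigenvectors of $\tilde A(\Sigma)^\top$ for the eigenvalue $\lambda_i+\lambda_j$ are instead $\mathrm{vec}(M_{ij}\Sigma)$, since $L_\Sigma^\ast(M_{ij}\Sigma)=(M_{ij}\Sigma+\Sigma M_{ij})\Sigma=(\lambda_i+\lambda_j)M_{ij}\Sigma$. So you should either prove this corrected form or explicitly note that only the eigenvalue statement for $\tilde A(\Sigma)^\top$ is established (which is all that the application in \Cref{cor:detSfactors} requires); note that the paper's own justification, ``follows from the symmetry of the matrices $z_i\otimes z_j+z_j\otimes z_i$,'' glosses over exactly the same point.
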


As a consequence of \Cref{theo:Asigmaeigenvalues}, we can conclude information regarding the factorization of the determinant of $A(\Sigma)_{\cdot,E}$ when $|E|=p(p+1)/2$ such that $A(\Sigma)_{\cdot,E}$ is a square matrix.

\begin{corollary}
\label{cor:detSfactors}
Let $G=(V,E)$ be a directed graph with $V=[p]$ and $|E|=p(p+1)/2$. The polynomials $\det(\Sigma)$ and $\det(H(\Sigma)_{E^{c},\cdot})$ are factors of $\det(A(\Sigma)_{\cdot,E})$.
\begin{proof}
The zero set of the determinant $\det(\Sigma)$ is the set of singular symmetric matrices.  Since $\det(\Sigma)$ is an irreducible polynomial, every polynomial that vanishes at all singular matrices must be a polynomial multiple of $\det(\Sigma)$. Hence, it suffices to show that $\det(A(\Sigma)_{\cdot,E})=0$ for all singular matrices $\Sigma$.  So let $\Sigma$ be a singular matrix.  Then there exists an eigenvalue $\lambda_{i}=0$ with $i \in [p]$.  Using \Cref{theo:Asigmaeigenvalues} this implies that the eigenvalue $\lambda_{i}+\lambda_{i}$ of $\tilde{A}(\Sigma)$ is zero (the theorem is written for $\Sigma$ positive definite but the fact we used also holds for $\Sigma$ singular). Hence,  $\mathrm{rank}(\tilde{A}(\Sigma))\leq p(p+1)/2-1$ which implies that $\mathrm{rank}(A(\Sigma)_{\cdot,E})\leq p(p+1)/2-1$ and thus $\det(A(\Sigma)_{\cdot,E})=0$.

The fact that $\det(H(\Sigma)_{E^{c},\cdot})$ is a factor of $\det(A(\Sigma)_{\cdot,E})$ follows from the proof of \Cref{theo:kernelcrit}. 
\end{proof}
\end{corollary}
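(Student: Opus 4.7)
My plan splits the corollary into two independent divisibilities, which I will handle by different arguments.

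\textbf{Divisibility by $\det(\Sigma)$.} Here the main ingredient is that $\det(\Sigma)$ is irreducible in the polynomial ring $\mathbb{R}[\Sigma_{ij}]_{i\le j}$, so it suffices to verify that $\det(A(\Sigma)_{\cdot,E})$ vanishes on the hypersurface $\{\det\Sigma = 0\}$ of singular symmetric matrices. For such $\Sigma$ I will pick an eigenvector $z_i$ with $\lambda_i = 0$; the eigenvector/eigenvalue identity in the proof of Theorem~\ref{theo:Asigmaeigenvalues} is purely algebraic and goes through without positive-definiteness, yielding $\tilde A(\Sigma)(z_i\otimes z_i) = 2\lambda_i\,(z_i\otimes z_i) = 0$. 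This exhibits an extra kernel direction of $\tilde A(\Sigma)$ beyond the $p(p-1)/2$-dimensional skew-symmetric one, so $\mathrm{rank}(\tilde A(\Sigma))\le p(p+1)/2 - 1$. Since $A(\Sigma)$ is just $\tilde A(\Sigma)$ with rows made non-redundant via the symmetry of the Lyapunov equation, the two share this rank. The square submatrix $A(\Sigma)_{\cdot,E}$ is therefore rank-deficient and $\det(A(\Sigma)_{\cdot,E})=0$ on $\{\det\Sigma=0\}$, which proves this first divisibility.

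\textbf{Divisibility by $\det(H(\Sigma)_{E^c,\cdot})$.} Because $|E| = p(p+1)/2$, both $A(\Sigma)_{\cdot,E}$ and $H(\Sigma)_{E^c,\cdot}$ are square. The proof of Theorem~\ref{theo:kernelcrit} already establishes the rank-equivalence $\mathrm{rank}(A(\Sigma)_{\cdot,E}) < p(p+1)/2$ if and only if $\mathrm{rank}(H(\Sigma)_{E^c,\cdot}) < p(p-1)/2$: any nonzero $\xi\in\ker H(\Sigma)_{E^c,\cdot}$ yields $\mathrm{vec}(M)=H(\Sigma)\xi$, a nonzero element of $\ker A(\Sigma)$ supported in $E$, and conversely. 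Hence the zero loci of the two determinants agree as sets. My plan for promoting this to a polynomial divisibility is to invoke Gale duality between a matrix and any basis of its kernel: for a full-row-rank $A$ with kernel basis $H$, the maximal minors satisfy $\det(A_{\cdot,S}) = \pm\kappa(A,H)\det(H_{S^c,\cdot})$ with $\kappa$ independent of $S$. Applied to $A(\Sigma)$ and $H(\Sigma)$, this gives a polynomial identity $\det(A(\Sigma)_{\cdot,E}) = \pm\kappa(\Sigma)\det(H(\Sigma)_{E^c,\cdot})$ valid for every admissible $E$. A degree count ($p(p+1)/2$ versus $p(p-1)/2$) forces $\deg\kappa = p$, and matching against the 3-cycle computation in~\eqref{eq:detASigma3cycle} pins down $\kappa(\Sigma) = \pm 2^p\det(\Sigma)$, which simultaneously repackages the first divisibility.

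The main obstacle I foresee is making the Gale-duality step rigorous in the polynomial ring, since an equality of zero loci alone would only yield an equality of radicals. The cleanest route I see is to form the $p^2 \times p^2$ block matrix $J(\Sigma)=\begin{pmatrix} A(\Sigma) \\ H(\Sigma)^\top \end{pmatrix}$ and exploit $A(\Sigma)H(\Sigma)=0$ to make $JJ^\top$ block diagonal, giving $\det(J)^2 = \det(A(\Sigma)A(\Sigma)^\top)\det(H(\Sigma)^\top H(\Sigma))$; writing $J$ with its columns split according to $E\sqcup E^c$ and applying Schur-complement manipulations then isolates the single-$E$ relation needed.
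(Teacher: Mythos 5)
Your first divisibility claim, that $\det(\Sigma)$ divides $\det(A(\Sigma)_{\cdot,E})$, is argued exactly as in the paper: irreducibility of $\det(\Sigma)$ reduces the problem to showing the determinant vanishes on singular symmetric matrices, and the eigenvalue computation behind \Cref{theo:Asigmaeigenvalues} (which is indeed purely algebraic and needs no positive-definiteness) supplies the extra kernel vector $z_i\otimes z_i$. This part is correct and essentially identical to the paper's argument.

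For the second divisibility you depart from the paper, which simply invokes the rank equivalence from the proof of \Cref{theo:kernelcrit}, and you are right to be uneasy: equality of the zero loci of $\det(A(\Sigma)_{\cdot,E})$ and $\det(H(\Sigma)_{E^c,\cdot})$ only identifies radicals, and $\det(H(\Sigma)_{E^c,\cdot})$ is in general reducible (for the complete DAG it is a product of principal minors), so set-theoretic vanishing does not by itself give divisibility. Your Gale/complementary-minors route is the right idea and, if completed, would prove the stronger exact factorization $\det(A(\Sigma)_{\cdot,E})=\pm 2^p\det(\Sigma)\det(H(\Sigma)_{E^c,\cdot})$, which goes beyond what the corollary asserts. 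But as written there is a genuine gap precisely at the step you flag: the complementary-minors identity over the function field $\mathbb{R}(\Sigma_{ij})$ only yields $\det(A(\Sigma)_{\cdot,S})=\pm\,\kappa(\Sigma)\det(H(\Sigma)_{S^c,\cdot})$ with $\kappa$ a priori a \emph{rational} function, and your proposed repair does not close it --- Laplace expansion of $\det J(\Sigma)$ along the $A(\Sigma)$-rows produces the full sum $\sum_S \pm\det(A(\Sigma)_{\cdot,S})\det(H(\Sigma)_{S^c,\cdot})$ over all column subsets $S$, and no Schur-complement manipulation isolates the single term indexed by $E$. What is missing is an argument that $\kappa$ has trivial denominator. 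Writing $\kappa=f/g$ in lowest terms, one has $g\mid\det(H(\Sigma)_{S^c,\cdot})$ for \emph{every} subset $S$ of size $p(p+1)/2$, so it suffices to exhibit two edge sets whose $H$-minors are coprime; the complete DAGs for the orderings $1<\dots<p$ and $p<\dots<1$ do this, since their $H$-determinants are products of distinct irreducible proper principal minors of $\Sigma$ with no common factor. With that step added, your degree count and the normalization against \eqref{eq:detASigma3cycle} go through, and you in fact recover both divisibilities at once.
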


\end{document}